\title{}
\author{}
\newtheorem{theorem}{Theorem}
\newtheorem{lemma}[theorem]{Lemma}
\newtheorem{corollary}[theorem]{Corollary}
\newtheorem{definition}{Definition}
\newcommand{\igood}[1]{\textbf{(s#1)}}
\newcommand{\icanonical}[1]{\textbf{(w#1)}}
\newcommand{\GG}{\mathcal{G}}
\newcommand{\PP}{\mathcal{P}}
\title{Strong immersions and maximum degree}
\author{Zden\v ek Dvo\v r{\'a}k\thanks{Computer Science Institute of Charles University, Prague, Czech Republic.
E-mail: \texttt{rakdver@iuuk.mff.cuni.cz}.  Supported the Center of Excellence -- Inst. for Theor. Comp. Sci., Prague (project P202/12/G061 of Czech Science Foundation), and
by project LH12095 (New combinatorial algorithms - decompositions, parameterization, efficient solutions) of Czech Ministry of Education.
}\and Tereza Klimo\v{s}ov{\'a}\thanks{Institute of Mathematics and DIMAP, University of Warwick, Coventry, UK. E-mail: \texttt{T.Klimosova@warwick.ac.uk}. Her~work leading to this
invention has received funding from the European Research Council under the European Union's Seventh Framework Programme (FP7/2007-2013)/ERC grant
agreement no.~259385.}}
\begin{document}

\maketitle

\begin{abstract}
A graph $H$ is \emph{strongly immersed} in $G$ if $G$ is obtained from $H$ by a sequence of vertex splittings (i.e., lifting some pairs
of incident edges and removing the vertex) and edge removals.
Equivalently, vertices of $H$ are mapped to distinct vertices of $G$ (\emph{branch vertices}) and edges of $H$ are mapped to pairwise edge-disjoint
paths in $G$, each of them joining the branch vertices corresponding to the ends of the edge and not containing any other branch vertices.
We show that there exists a function $d\colon N\to N$ such that for all graphs $H$ and $G$, if $G$ contains a strong immersion of the star $K_{1,d(\Delta(H))|V(H)|}$ whose branch vertices
are $\Delta(H)$-edge-connected to one another, then $H$ is strongly immersed in $G$.  This has a number of structural consequences for graphs
avoiding a strong immersion of $H$.  In particular, a class $\GG$ of simple $4$-edge-connected graphs contains all graphs of maximum degree $4$ as strong immersions
if and only if $\GG$ has either unbounded maximum degree or unbounded tree-width.
\end{abstract}

In this paper, graphs are allowed to have parallel edges and loops, where each loop contributes $2$ to the degree of the incident vertex.  A graph without parallel edges and loops is called \emph{simple}.

Various containment relations have been studied in structural graph theory.  The best known ones are \emph{minors} and \emph{topological minors}.
A graph $H$ is a \emph{minor} of $G$ if it can be obtained from $G$ by a sequence of edge and vertex removals and edge contractions.
A graph $H$ is a \emph{topological minor} of $G$ if a subdivision of $H$ is a subgraph of $G$, or equivalently, if $H$ can be obtained from $G$
by a sequence of edge and vertex removals and by suppressions of vertices of degree two.  In a fundamental series of papers, Robertson and Seymour developed
the theory of graphs avoiding a fixed minor, giving a description of their structure~\cite{robertson2003graph} and proving that every proper minor-closed class
of graphs is characterized by a finite set of forbidden minors~\cite{rs20}.  The topological minor relation is somewhat harder to deal with (and in particular,
there exist proper topological minor-closed classes that are not characterized by a finite set of forbidden topological minors), but a description of their structure
is also available~\cite{gmarx,topmin}.

In this paper, we consider a related notion of a graph immersion. Let $H$ and $G$ be graphs.  
An \emph{immersion} of $H$ in $G$ is a function $\theta$ from vertices and edges of $H$ such that
\begin{itemize}
\item $\theta(v)$ is a vertex of $G$ for each $v\in V(H)$, and $\theta\restriction V(H)$ is injective.
\item $\theta(e)$ is a connected subgraph of $G$ for each $e\in E(H)$, and if $f\in E(H)$ is distinct from $e$, then
$\theta(e)$ and $\theta(f)$ are edge-disjoint.
\item If $e\in E(H)$ is incident with $v\in V(H)$, then $\theta(v)$ is a vertex of $\theta(e)$,
and if $e$ is a loop, then $\theta(e)$ contains a cycle passing through $\theta(v)$.
\end{itemize}
An immersion $\theta$ is \emph{strong} if it additionally satisfies the following condition:
\begin{itemize}
\item If $e\in E(H)$ is not incident with $v\in V(H)$, then $\theta(e)$ does not contain $\theta(v)$.
\end{itemize}
When we want to emphasize that an immersion does not have to be strong, we call it \emph{weak}.
Let $E(\theta)$ denote $\bigcup_{e\in E(H)} E(\theta(e))$ and let $V(\theta)$ denote $\bigcup_{e\in E(H)} V(\theta(e))$.
Let us note that by choosing the subgraphs $\theta(e)$ as small as possible, we can assume that
$\theta(e)$ is a path with endvertices $\theta(u)$ and $\theta(v)$
if $e$ is a non-loop edge of $H$ joining $u$ and $v$, and that $\theta(e)$ is a cycle containing $\theta(v)$
if $e$ is a loop of $H$ incident with $v$; we call an immersion satisfying these constraints \emph{slim}.

If $H$ is a topological minor of $G$, then $H$ is also strongly immersed in $G$.  On the other hand,
an appearance of $H$ as a minor does not imply an immersion of $H$, and conversely, an appearance of $H$ as a strong immersion
does not imply the appearance as a minor or a topological minor.  Nevertheless, many of the results for minors and topological minors
have analogues for immersions and strong immersions.  For example, any simple graph with minimum degree at least $200k$ contains
a strong immersion of the complete graph $K_k$ (DeVos et al.~\cite{mindim}), as compared to similar sufficient minimum degree conditions
for minors ($\Omega(k\sqrt{\log k})$, Kostochka~\cite{kostomindeg}, Thomason~\cite{thommindeg}) and topological minors ($\Omega(k^2)$,
Bollob\'as and Thomason~\cite{BoTh}, Koml\'os and Szemer\'edi \cite{KoSz}). A structure theorem for weak immersions appears
in DeVos et al.~\cite{mattpriv} and Wollan~\cite{wollims}.  Furthermore, every proper class of graphs closed on weak immersions
is characterized by a finite set of forbidden immersions~\cite{rs23}.

Chudnovsky et al.~\cite{seymgrid} proved the following variation
on the grid theorem of Robertson and Seymour~\cite{RSey}.
\begin{theorem}\label{thm-grid}
For every $g\ge 1$, there exists $t\ge 0$ such that every $4$-edge-connected graph of tree-width at least $t$ contains the $g\times g$ grid
as a strong immersion.
\end{theorem}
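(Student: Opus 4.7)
The plan is to upgrade a large grid minor (supplied by the classical Robertson--Seymour grid-minor theorem) to a strong immersion of a smaller grid, using $4$-edge-connectivity to convert branch sets into branch vertices.

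First, apply the Robertson--Seymour grid-minor theorem to obtain, from tree-width at least $t$, a $g'\times g'$ grid minor of $G$ with $g'=gk$ for some $k=k(g)$ to be chosen sufficiently large. Let $\{B_{a,b}\}$ denote the branch sets of this minor, and fix one inter-branch-set edge for each adjacent pair of branch sets.

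Second, coarsen the minor into a $g\times g$ pattern of \emph{super-cells} $C_{i,j}$, each a $k\times k$ union of adjacent branch sets $B_{a,b}$. Each super-cell will host one branch vertex $v_{i,j}$ of the target $g\times g$ grid immersion, and the $\Theta(k)$ inter-cell edges between any two adjacent super-cells provide flexibility in choosing entry and exit points. For each edge of the coarse $g\times g$ grid we select one such inter-cell edge to serve as the ``crossing'' portion of the corresponding immersion path, and stitch things together with paths inside the incident super-cells.

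Third, inside each super-cell $C_{i,j}$ choose a central vertex $v_{i,j}$ (say in a central small branch set) and designate four \emph{ports}, one per compass direction, near the chosen crossing edges. Route four pairwise edge-disjoint paths from $v_{i,j}$ to the four ports using only edges inside $C_{i,j}$, so that local routings in distinct super-cells are automatically edge-disjoint and avoid all other $v_{i',j'}$. Combined with the inter-cell crossing edges, this yields the desired strong immersion of the $g\times g$ grid.

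The principal obstacle is the local routing in the third step. The small branch set containing $v_{i,j}$ need not be $4$-edge-connected, and a $k\times k$ grid minor inside $C_{i,j}$ does not by itself supply $4$-edge-connectivity of the graph induced on $V(C_{i,j})$; naively, only the global $4$-edge-connectivity of $G$ gives four edge-disjoint paths from $v_{i,j}$ to the ports via Menger's theorem, but such paths might wander far outside $C_{i,j}$ and collide with routings in other super-cells. A natural way to handle this is an inductive build-up: process super-cells one at a time, maintaining a large residual grid minor in the unused part, and reserve ``corridors'' along which local routings may safely escape their super-cells; choosing $k$ large enough as a function of $g$ provides the required slack. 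The $4$-edge-connectivity assumption is essential precisely because each branch vertex of the target grid has degree up to $4$, so fewer than four edge-disjoint local routings cannot be guaranteed.
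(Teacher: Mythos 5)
The paper does not prove Theorem~\ref{thm-grid}; it is quoted from Chudnovsky et al.~\cite{seymgrid}, so there is no internal proof to compare your attempt against. Judged on its own, your proposal has a genuine gap, and you have in fact located it yourself: the trouble is that the repair you sketch (process super-cells inductively, reserving escape corridors) does not close it.

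The gap sits in your third step. A grid minor places no lower bound on the edge-connectivity of the subgraph induced by a super-cell, nor even on the degree of $v_{i,j}$ inside its own branch set. If the branch set hosting $v_{i,j}$ is a tree in which $v_{i,j}$ is a leaf, at most one of the four required routings can leave $v_{i,j}$ while staying inside the super-cell; the other three must immediately exit. Global $4$-edge-connectivity plus Menger gives four edge-disjoint $v_{i,j}$--port paths in $G$ for a single cell, but with $\Theta(g^2)$ super-cells each demanding four paths, there is no mechanism forcing these $\Theta(g^2)$ families to be simultaneously edge-disjoint: $4$-edge-connectivity supplies four edge-disjoint paths between one pair of terminals, not $\Omega(g^2)$ independent corridors. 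Enlarging the starting grid to $g'=gk$ does not obviously help either, since extra cells supply more branch sets but not more edge-connectivity in the neighbourhood of $v_{i,j}$; the local obstruction is insensitive to $k$. To make the plan work you would need a much more structured object than a bare grid minor (a wall or grid that is additionally \emph{locally} linked in an edge sense), or a concrete rerouting scheme for collisions, neither of which appears in the argument as written. For what it is worth, the proof in~\cite{seymgrid} does not pass through grid minors in this way but works with a tangle of large order together with $4$-edge-connectivity to build the immersion directly. Your explanation of why $4$-edge-connectivity is the right hypothesis (degree-$4$ branch vertices) is sound and is witnessed, for instance, by $3$-regular expanders, which have unbounded tree-width yet cannot strongly immerse any graph containing a vertex of degree $4$.
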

The variant of Theorem~\ref{thm-grid} for weak immersions was also proved by Wollan~\cite{wollims}.
Note that unlike the grid theorem for minors~\cite{RSey}, Theorem~\ref{thm-grid} does not admit a weak converse---there exist graphs of bounded
tree-width containing arbitrarily large grids as strong immersions.
A connected graph $H$ with at least three vertices is a \emph{multistar} if it has no loops and contains
a vertex $c$ incident with all its edges.  The vertex $c$ is the \emph{center} of the multistar and all its other vertices are \emph{rays}.
We write $c(H)$ for the center of the multistar and $R(H)$ for the set of its rays.
Let the multistar with $n$ rays of degree $k$ be denoted by $S_{n,k}$.  Note that every graph with at most $n$ vertices and with maximum degree at
most $k$ is contained as a strong immersion in $S_{n,k}$, which has tree-width $1$.  Furthermore, subdividing each edge of $S_{n,k}$ results
in a simple graph of tree-width $2$ containing every graph with at most $n$ vertices and with maximum degree at
most $k$ as a strong immersion.

Consequently, to turn Theorem~\ref{thm-grid} into an approximate characterization, we need to deal with the star-like graphs.
The main result of this paper essentially states that if the maximum degree of $H$ is $k$ and a $k$-edge-connected graph $G$ contains
a sufficiently large star as a strong immersion, then $G$ also contains $H$ as a strong immersion.  Let us now state the result more precisely.

A \emph{$k$-system of magnitude $d$} in a graph $G$ is a pair $(H,\sigma)$, where $H$ is a multistar and $\sigma$ is a strong immersion of $H$ in $G$
satisfying the following conditions:
\begin{description}
\item[\igood{1}] $H$ has at least $d$ edges,
\item[\igood{2}] rays of $H$ have degree at most $k$ (in $H$), and
\item[\igood{3}] for each $v\in R(H)$, there exists no edge cut in $G$ of size less than $k$ separating 
$\sigma(c(H))$ from $\sigma(v)$.
\end{description}
Let us remark that by Menger's theorem, \igood{3} implies that $G$ contains $k$ pairwise edge-disjoint paths
from $\sigma(c(H))$ to $\sigma(v)$ (not necessarily belonging to or disjoint with the immersion).

A strong immersion $\theta$ of $S_{n,k}$ in a graph $G$ \emph{respects} a strong immersion $\sigma$ of a multistar $H$ in $G$ if
$\theta(c(S_{n,k}))=\sigma(c(H))$ and $\theta(R(S_{n,k}))\subseteq \sigma(R(H))$.
Let us define $d(k)=(2k+1)^{8k+4}k^2(k+1)$.

\begin{theorem}\label{thm-main}
If $k\ge 3$ and $n\ge 2$ are integers and $(H,\sigma)$ is a $k$-system of magnitude
at least $d(k)n$ in a graph $G$, then $G$ contains $S_{n,k}$ as a strong immersion respecting $(H,\sigma)$.
\end{theorem}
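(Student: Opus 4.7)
My plan is to make $\sigma$ slim so that each edge of $H$ at ray $v$ corresponds to a path $P_v^i$ in $G$ from $r_v:=\sigma(v)$ to $c^\star:=\sigma(c(H))$; these paths are pairwise edge-disjoint and internally disjoint from every $r_w$, $w\in R(H)$. The target immersion $\theta$ will use $c^\star$ as its center, a carefully chosen set of $n$ ray images $r_{v_1},\ldots,r_{v_n}$ as its rays, and will reuse as many of the $P_{v_i}^j$ as possible, supplementing them with extra paths produced from (s3) where necessary.

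First, by an iterated pigeonhole on ray features, I pass to a sub-$k$-system in which all rays have one common degree $d^\star\in\{1,\ldots,k\}$ in $H$ and in which the auxiliary $k$-edge-disjoint-path families furnished by (s3) exhibit a normalized crossing pattern with the other rays. Each refinement trims by a factor at most $2k+1$, which is why $d(k)$ contains the budget $(2k+1)^{8k+4}$ for about $8k+4$ such refinements. If the clean subsystem already has $d^\star=k$, the proof is over: any $n$ of its rays and their $P_{v_i}^j$ paths form an $S_{n,k}$-immersion respecting $(H,\sigma)$, inheriting strongness from $\sigma$ and needing no new paths. The interesting case is $d^\star<k$.

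When $d^\star<k$, at each selected recipient $v$ I need $k-d^\star$ further edge-disjoint $r_v$-to-$c^\star$ paths that avoid all other chosen ray images. I would produce them by \emph{donation}: for each missing path pick an unused donor ray $w$, use $P_w^1$ as its tail into $c^\star$, and prepend a connecting path $\pi_{v,w}$ from $r_v$ to $r_w$ drawn from the $k$-edge-disjoint family at $r_v$ guaranteed by (s3). The prepended path must be internally disjoint from all chosen ray images and edge-disjoint from every previously allocated path. Processing recipients one at a time and greedily, a bounded number of donors is consumed per recipient, so the factor $k^2(k+1)$ in $d(k)$ accounts for the donor-per-recipient ratio plus the slack needed so that the pool of eligible rays never runs out during this greedy pass.

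The main obstacle I expect is coordinating the donations: the $\pi_{v,w}$ must be simultaneously edge-disjoint and must avoid all previously chosen $r_{v_i}$, which is a multi-commodity routing problem. The reason (s3) is the right hypothesis is that every previously consumed edge and every currently forbidden ray vertex is anchored at $c^\star$ through a $\sigma$-path, so the obstruction at any new $r_v$ is controlled by how many $\sigma$-paths sit on the forbidden side of a minimum $(r_v,c^\star)$-cut of $G$. The normalization obtained in the first step keeps this count small enough that the $k$ paths of (s3) survive to produce $\pi_{v,w}$ locally, near $r_v$ and near the relevant donor, without disturbing the parts of $\theta$ that are already built; this local character is what makes a single greedy pass suffice and what justifies the particular product form of $d(k)$.
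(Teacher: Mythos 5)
Your approach is genuinely different from the paper's, and the difference is exactly where the gap lies. You try to work directly in the original graph $G$: normalize the system by pigeonholing, then greedily route extra $r_v$-to-$c^\star$ paths via ``donation.'' The paper instead spends essentially all of its effort \emph{reducing} $G$ before routing anything: using a Mader/Frank-style splitting-off theorem it repeatedly lifts edges and contracts pieces until the whole triple $(G,H,\sigma)$ is ``peeled''---$G$ has only the center, the ray images, and a few intermediate vertices, every non-center vertex has degree at most $2k+1$, every ray image has degree exactly $k$, and every $\sigma(e)$ has length at most two. In that reduced graph one shows (Lemma~\ref{lemma-bnddeg}) that the $k$ edge-disjoint paths from any ray image to the center can be taken of length at most $4k+2$. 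Bounded degree plus bounded length means each ray interacts with at most $N=(2k+1)^{8k+4}$ vertices, so one can greedily pick $n$ rays at pairwise distance $>8k+4$; their short path systems are then automatically edge-disjoint and strong, with no multi-commodity routing to coordinate at all. That is where $(2k+1)^{8k+4}$ actually comes from: it is a ball-size bound in a bounded-degree graph, not a budget for $8k+4$ pigeonhole refinements.

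Concretely, the gaps in your plan are these. First, the normalization step is not a proof: it is unclear what the finitely many ``ray features'' and ``crossing patterns'' are, why a bounded number of pigeonhole rounds makes them uniform, and---most importantly---why uniformity would control the paths furnished by (s3). Those paths are produced by Menger's theorem in the ambient $G$ and may wander through arbitrarily many other ray images; nothing in (s1)--(s3) localizes them, so the obstruction to routing $\pi_{v,w}$ disjointly from all previously built pieces is genuinely global, not ``controlled near $r_v$ and the donor.'' Second, your ``donation'' scheme silently assumes a prefix of some (s3)-path at $r_v$ terminates at a usable donor $r_w$ with a free $\sigma$-tail into $c^\star$; there is no reason this allocation closes up without conflicts across all $n$ recipients and $k-d^\star$ slots simultaneously---this is exactly the multi-commodity routing problem you name, and the greedy pass is asserted rather than justified. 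Third, your reading of the factor $k^2(k+1)$ (donor-per-recipient ratio) does not match its actual role in the paper: $k(k+1)$ is the loss incurred in passing to a subsystem whose ray images all have degree exactly $k$ in $G$, and the remaining $k$ converts an edge count to a ray count. The idea you are missing is the reduction machinery (edge cuts of size $\le 2$, splitting off at high-degree vertices, the ``core'' contraction), which converts the global routing problem into a purely local, bounded-radius one. With that reduction in hand, the $d^\star=k$ case you identify as ``trivial'' is, in effect, forced to occur, and the rest is a sparsity argument.
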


If $G$ is $k$-edge-connected and contains a vertex $c$ with at least $d$ distinct neighbors,
then the neighborhood of $c$ contains a $k$-system of magnitude $d$.  Let us recall that if a graph $F$ has $n$ vertices
and maximum degree at most $k$, then $F$ is strongly immersed in $S_{n,k}$.

Furthermore, the relation of strong immersion is transitive.
If $H_1$ has an immersion $\theta_1$ in $H$ and $H$ has an immersion $\theta$ in $G$, then let
$\theta\circ\theta_1$ be defined as follows: $(\theta\circ\theta_1)(v)=\theta(\theta_1(v))$ for each $v\in V(H_1)$
and $(\theta\circ\theta_1)(e)=\bigcup_{f\in E(\theta_1(e))} \theta(f)$ for each $e\in E(H_1)$.
Note that $\theta\circ\theta_1$ is an immersion of $H_1$ in $G$, and if $\theta_1$ and $\theta$ are strong,
then $\theta\circ\theta_1$ is strong.  Consequently, Theorem~\ref{thm-main} has the following corollary.

\begin{corollary}\label{cor-main}
For every integer $k\ge 3$ and a graph $F$ of maximum degree at most $k$, if a $k$-edge-connected graph $G$ contains a vertex
with at least $d(k)|V(F)|$ distinct neighbors, then $F$ appears in $G$ as a strong immersion.
\end{corollary}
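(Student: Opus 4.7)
The plan is to chain together the three ingredients that the paper has just lined up: the observation that a $k$-edge-connected graph with a vertex of degree at least $d$ contains a $k$-system of magnitude $d$, Theorem~\ref{thm-main} itself, and the transitivity of strong immersion via composition. The whole argument is essentially mechanical once these pieces are in place.

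Let $n=|V(F)|$; the cases $n\le 1$ are trivial, so I may assume $n\ge 2$. First, I would pick a vertex $c\in V(G)$ with at least $d(k)n$ distinct neighbors, fix a subset $v_1,\dots,v_{d(k)n}$ of them, and take $H$ to be the multistar with center $c_0$ and rays $r_1,\dots,r_{d(k)n}$, each joined to $c_0$ by a single edge. Define $\sigma(c_0)=c$, $\sigma(r_i)=v_i$, and send the edge $c_0r_i$ of $H$ to the edge $cv_i$ of $G$. Then \igood{1} holds by count, \igood{2} holds because every ray has $H$-degree $1\le k$, and \igood{3} is immediate from the $k$-edge-connectivity of $G$; moreover $\sigma$ is plainly a strong immersion. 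Hence $(H,\sigma)$ is a $k$-system of magnitude $d(k)n$, and Theorem~\ref{thm-main} produces a strong immersion $\theta$ of $S_{n,k}$ in $G$.

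Next, I would construct a strong immersion $\theta_1$ of $F$ in $S_{n,k}$ directly: map $V(F)$ injectively to distinct rays of $S_{n,k}$, represent each non-loop edge $uv\in E(F)$ by a path of length~$2$ through the center using one previously unused ray-to-center edge at each of $\theta_1(u)$ and $\theta_1(v)$, and represent each loop at $u$ by a cycle of length~$2$ using two previously unused ray-to-center edges at $\theta_1(u)$. This is feasible because every ray of $S_{n,k}$ has $k$ parallel edges to the center while every vertex of $F$ has degree at most~$k$, and it is strong because the only internal vertex of every representing walk is the center of $S_{n,k}$, which is not the image of any vertex of $F$. Finally, the composition $\theta\circ\theta_1$, defined in the paragraph preceding the corollary, is a strong immersion of $F$ in $G$, as desired.

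There is no genuine obstacle: the corollary is engineered to follow immediately from Theorem~\ref{thm-main}, and each of the three steps was explicitly foreshadowed in the discussion just above the statement. The only subtlety worth flagging is to verify that both $\sigma$ and $\theta_1$ are truly \emph{strong} (not merely weak) immersions, so that the composed object $\theta\circ\theta_1$ inherits strongness; this holds for the reasons indicated above, and once granted, the remainder is bookkeeping.
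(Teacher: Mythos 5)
Your proof follows exactly the paper's intended argument: build the obvious $k$-system of magnitude $d(k)|V(F)|$ at a high-degree vertex (using $k$-edge-connectivity for \igood{3}), invoke Theorem~\ref{thm-main} to extract $S_{n,k}$, and compose with the strong immersion of $F$ in $S_{n,k}$ via the transitivity discussion immediately preceding the corollary. The paper leaves these steps implicit; your write-up just makes them explicit, including the elementary construction of the strong immersion of $F$ into $S_{n,k}$ through its center.
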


The version of Corollary~\ref{cor-main} for weak immersions was previously obtained by a different method by Marx and Wollan~\cite{weakdeg}.
As a consequence of Corollary~\ref{cor-main}, we obtain the following strengthening of Theorem~\ref{thm-grid}.

\begin{theorem}\label{thm-class}
Let $\GG$ be a class of $4$-edge-connected simple graphs.  The following propositions are equivalent:
\begin{itemize}
\item[\textrm{(i)}] There exists a graph $F$ of maximum degree $4$ that does not appear as a weak immersion in any graph in $\GG$.
\item[\textrm{(ii)}] There exists a graph $F$ of maximum degree $4$ that does not appear as a strong immersion in any graph in $\GG$.
\item[\textrm{(iii)}] There exists an integer $t\ge 4$ such that every graph in $\GG$ has tree-width at most $t$ and maximum degree at most $t$.
\end{itemize}
\end{theorem}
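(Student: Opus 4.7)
The plan is to prove $(i) \Rightarrow (ii) \Rightarrow (iii) \Rightarrow (i)$. The first implication is immediate because every strong immersion is a weak immersion, so any $F$ witnessing $(i)$ also witnesses $(ii)$.

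For $(ii) \Rightarrow (iii)$ I would argue the contrapositive: assume the failure of $(iii)$---so that for every $t\ge 4$ some graph in $\GG$ has maximum degree or tree-width exceeding $t$---and show every graph $F$ of maximum degree $4$ strongly immerses in some $G\in\GG$. Given such $F$ on $n$ vertices, either some $G\in\GG$ has a vertex of degree at least $d(4)n$, in which case Corollary~\ref{cor-main} applies directly (using that $G$ is simple and $4$-edge-connected and hence the vertex supplies that many distinct neighbors), or $\GG$ contains graphs of arbitrarily large tree-width, in which case Theorem~\ref{thm-grid} produces a $g\times g$ grid as a strong immersion inside some $G\in\GG$ for any desired $g$. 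The remaining task is to embed $F$ as a strong immersion of a sufficiently large grid, after which transitivity of strong immersion finishes the case. For this embedding I would first augment $F$ to a $4$-regular multigraph $F'\supseteq F$ (absorbing the degree deficits using parallel edges or a few auxiliary gadgets), then place $F'$ via a standard orthogonal grid drawing: vertices at distinct grid points, edges as edge-disjoint orthogonal lattice paths, and---because every branch vertex of $F'$ uses all four of its grid-ports---no routed path can enter a non-incident branch vertex, yielding a strong immersion of $F'\supseteq F$ in the grid.

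For $(iii) \Rightarrow (i)$, let $t\ge 4$ witness $(iii)$ and take $F$ to be the $g\times g$ grid with $g$ chosen slightly larger than $(t+1)t$. I claim $F$ does not weakly immerse in any $G\in\GG$. Assuming for contradiction a weak immersion with branch map $\sigma$ and edge-disjoint image paths, pick a tree decomposition of $G$ of width at most $t$ and, by a centroid argument weighted by the indicator of $\sigma(V(F))$, choose a bag $B$ of size at most $t+1$ for which $V(G)\setminus B$ splits into $V_1\sqcup V_2$ with no edges between $V_1$ and $V_2$ and each $|\sigma^{-1}(V_i)|\ge g^2/3-(t+1)$. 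At most $(t+1)t$ edges of $G$ are incident to $B$; each $F$-edge across the induced bipartition of $V(F)$ has its image path using at least one such edge, so edge-disjointness caps the $F$-bipartition cut by $(t+1)t$. On the other hand, the edge-isoperimetric inequality of the grid forces any bipartition of $V(F)$ with both sides a positive fraction of $g^2$ to cut $\Omega(g)$ edges, contradicting the cap once $g$ is sufficiently large.

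The main obstacle I anticipate is the grid-embedding step inside $(ii)\Rightarrow(iii)$: ordinary orthogonal drawings ensure edge-disjointness of routed paths but do not by themselves prevent a path from passing through a non-incident vertex whose grid-ports are not all occupied. The $4$-regularization device sketched in the previous paragraph is meant to close this gap, at the cost of some bookkeeping on multiedges, self-loops, and degree-parity, and most of the technical writing will go into verifying it carefully.
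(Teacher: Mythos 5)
Your argument is correct, and two of the three implications track the paper closely, but the implication $\mathrm{(iii)}\Rightarrow\mathrm{(i)}$ takes a genuinely different route. The paper chooses $F$ to be a sufficiently large \emph{$4$-regular expander}: after taking a balanced bag $A\cap B$ of size at most $t+1$ for the weight $\theta(V(F))$, the expansion property forces $\Omega(|V(F)|)$ edges of $F$ across the induced bipartition, hence at least that many edge-disjoint image-path edges incident with $A\cap B$, so some separator vertex has degree exceeding $t$. You instead take $F$ to be a \emph{large grid} and invoke its edge-isoperimetric inequality: the same balanced-bag argument caps the number of crossing $F$-edges by $(t+1)t$, while isoperimetry forces $\Omega(g)$ of them, yielding $g=O(t^2)$ and hence a contradiction once $g$ is a large enough multiple of $(t+1)t$ (note: you will need a constant factor beyond $(t+1)t$, since the grid's balanced cut is only guaranteed to be of order $g$, not exactly $g$). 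Both arguments use the same tree-width separator lemma and the same edge-disjointness observation; the paper's version bounds a single vertex's degree via expansion, whereas yours bounds the total crossing number via isoperimetry. Your choice has the minor advantage of avoiding any appeal to expander constructions, and it also makes the witnessing $F$ in $\mathrm{(i)}$ a grid, which the paper separately remarks is possible ``obviously.''

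For $\mathrm{(ii)}\Rightarrow\mathrm{(iii)}$ you argue the contrapositive while the paper argues directly, but the substance is identical: both call Theorem~\ref{thm-grid} for the tree-width bound and Corollary~\ref{cor-main} for the degree bound. The one step you flag as an obstacle, embedding an arbitrary max-degree-$4$ graph as a strong immersion in a grid, is something the paper simply asserts without proof. Your sketch via $4$-regularization plus an orthogonal drawing is a reasonable way to supply a proof; the key point you correctly identify is that saturating all four grid ports at each branch vertex is what upgrades edge-disjointness to strong immersion. Be careful that the parity of the total degree deficit is always even, so pairing deficient vertices by new edges and absorbing remaining even deficits by loops is always possible, and note that in the target grid a loop becomes a cycle through the branch vertex using two ports. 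With that detail spelled out, the proposal matches the statement and its proof obligations.
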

Let us remark that the assumption that graphs in $\GG$ are simple is important---if $\GG$ is the class of all graphs that can be obtained from paths by replacing each edge by
at least four parallel edges, then $\GG$ satisfies (ii), but not (i) and (iii).  Furthermore, if $\GG$ is the class of graphs obtained from
simple $4$-edge-connected $4$-regular graphs of bounded tree-width (say at most $10$) by replacing one edge by any positive number of parallel edges, then
$\GG$ satisfies (i) and (ii), but not (iii).  The implications $\mathrm{(iii)}\Rightarrow \mathrm{(i)}\Rightarrow\mathrm{(ii)}$ hold even
if $\GG$ contains non-simple graphs, though.  Obviously, in propositions (i) and (ii), we could restrict $F$ to be a square grid.

Flows in networks can be used to determine whether a $k$-system of large magnitude with a given center and rays exists.  This enables us
to restate Theorem~\ref{thm-main} in the following form.

\begin{theorem}\label{thm-sep}
Let $G$ be a graph and $c$ a vertex of $G$.
Let $X\subseteq V(G)\setminus \{c\}$ be any set of vertices such that $G$ contains no edge cut of size less than $k$ separating $c$ from a vertex in $X$.
If a graph $F$ of maximum degree at most $k\ge 3$ does not appear in a graph $G$ as a strong immersion, then
there exist sets $Y\subseteq X$ and $K\subseteq E(G)$ such that $k|Y|+|K|<d(k)|V(F)|$ and the component of $G-Y-K$ that
contains $c$ does not contain any vertex of $X$.
\end{theorem}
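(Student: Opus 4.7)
I would argue by contraposition. Suppose that no pair $(Y,K)$ with $Y\subseteq X$, $K\subseteq E(G)$ and $k|Y|+|K|<d(k)|V(F)|$ has the property that the $c$-component of $G-Y-K$ contains no vertex of $X$; the goal is then to produce a strong immersion of $F$ in $G$. Since any graph of maximum degree at most $k$ on at most $|V(F)|$ vertices is strongly immersed in $S_{|V(F)|,k}$, and strong immersion is transitive, it suffices by Theorem~\ref{thm-main} to construct a $k$-system $(H,\sigma)$ in $G$ of magnitude at least $d(k)|V(F)|$ with $\sigma(c(H))=c$ and $\sigma(R(H))\subseteq X$. Conditions \igood{1} and \igood{2} will be built into the construction, and \igood{3} follows from the assumption that no edge cut of size less than $k$ in $G$ separates $c$ from any vertex of $X$.

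To build such a $k$-system, I would set up a vertex-capacitated flow network $G^+$ obtained from $G$ by adding a new sink $t$ and, for each $x\in X$, $k$ parallel unit-capacity edges from $x$ to $t$, with vertex capacity $k$ imposed at each $x\in X$ (realised through the usual vertex-splitting trick). A vertex-capacitated version of Menger's theorem then identifies the value of a maximum $c$-$t$ flow in $G^+$ with the minimum of $|K|+k|Y|$ taken over pairs $(Y,K)$ with $Y\subseteq X$, $K\subseteq E(G)$ for which the $c$-component of $G-Y-K$ avoids $X$. Our hypothesis says this minimum is at least $d(k)|V(F)|$; hence an integral maximum flow decomposes into at least $d(k)|V(F)|$ pairwise edge-disjoint paths from $c$ to $t$ in $G^+$, each finishing with a single $(x,t)$-edge, with at most $k$ of the paths touching any given $x\in X$.

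The main technical step, and the principal obstacle, is to upgrade this collection of paths into the edge set of a genuine strong immersion of a multistar: each path, viewed as a walk in $G$, must meet $X$ only at its terminal vertex. Among all maximum-flow decompositions I would pick one minimising the total number of visits to $X$, and then prove by an exchange argument that in this minimiser each path visits $X$ exactly once. Indeed, if a path $P$ passed through $y\in X$ before terminating at some $x\neq y$, then $P$ itself would consume one unit of the vertex capacity at $y$ in transit, so at most $k-1$ of the $k$ parallel $(y,t)$-edges could be in use; truncating $P$ at $y$ and using one of the remaining $(y,t)$-edges yields a feasible decomposition with strictly smaller total $X$-visit count, a contradiction. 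After the routine shortcuts that make each path simple and remove revisits of $c$, the resulting walks are precisely the paths $\sigma(e)$ for the edges $e$ of the desired multistar $H$ with $\sigma(c(H))=c$ and rays mapped to the distinct endpoints in $X$, each of degree at most $k$; this $k$-system has magnitude at least $d(k)|V(F)|$, which completes the proof via the reduction from the first paragraph.
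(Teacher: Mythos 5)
Your proof is correct and follows the same overall strategy as the paper's: reduce via Theorem~\ref{thm-main} to producing a $k$-system of magnitude $d(k)|V(F)|$ rooted at $c$ with rays in $X$, model the problem as a max-flow instance, and read $(Y,K)$ off a minimum cut. The one genuine technical divergence is in the auxiliary network and in how one guarantees that the flow paths, viewed in $G$, touch $X$ only at their terminal vertex (exactly what is needed for $\sigma$ to be a \emph{strong} immersion of the multistar). You use a vertex-capacitated network---capacity $k$ at each $x\in X$ plus $k$ parallel unit arcs from $x$ to the sink---and then need a rerouting step: among maximum-flow decompositions, one minimising total $X$-visits cannot transit through any $y\in X$, since a transit frees up a $(y,t)$-arc and the path can be truncated at $y$. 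The paper avoids the exchange entirely by orienting the network asymmetrically: every $G$-edge with exactly one end in $X$ becomes a one-way arc into $X$, $G$-edges with both ends in $X$ are discarded, and each $x\in X$ gets a single arc of capacity $k$ to the sink $z$. A flow path entering $X$ then has no choice but to go straight to $z$, so the no-transit property holds by construction, and the min cut literally hands you $Y$ (those $x$ whose $xz$-arc is cut) and $K$ (the remaining cut arcs). Both arguments work; the paper's orientation trick buys a shorter proof and makes the cut-to-$(Y,K)$ translation immediate, whereas your version additionally has to check the vertex-capacitated max-flow/min-cut correspondence---in particular that, in a minimum cut, cutting the internal arc of $x$ is never worse than cutting its $k$ parallel sink arcs---which is true but worth spelling out.
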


Theorem~\ref{thm-sep} forms a basis for a structure theorem for strong immersions analogous to the one for weak immersions~\cite{mattpriv,wollims},
which we develop in a future paper.  Here, let us state just the first step towards this structure.
\begin{theorem}\label{thm-str}
For every graph $F$ and an integer $m\ge 0$, there exists a constant $M$ such that the following holds.  Let $G$ be a graph and $X\subseteq V(G)$ a set of its vertices such that no two
vertices of $X$ are separated by an edge cut of size less than $M$ in $G$.  Let $G_X$ be the graph with vertex set $X$ in that two vertices $u,v\in X$ are adjacent if
$G$ contains $m$ pairwise edge-disjoint paths joining $u$ with $v$ and otherwise disjoint with $X$.  If $G$ does not contain $F$ as a strong immersion, then $G_X$ is connected.
\end{theorem}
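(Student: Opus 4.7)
\medskip
\noindent\textbf{Proof plan.} I will prove the contrapositive: assuming $G_X$ is disconnected, I exhibit $F$ as a strong immersion in $G$. Let $k=\max(\Delta(F),3)$ and $n=|V(F)|$, and take $M$ to be a sufficiently large polynomial in $k,n,m$ (to be fixed along the way). Fix a partition $X=X_1\cup X_2$ (both nonempty) with no $G_X$-edges between them; by definition, for every $u\in X_1,v\in X_2$, fewer than $m$ edge-disjoint $u$-$v$ paths in $G$ avoid $X\setminus\{u,v\}$ internally.

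Using that $X$ is pairwise $M$-edge-connected in $G$, Menger's theorem supplies at least $M$ edge-disjoint $X_1$-to-$X_2$ paths in $G$. I decompose each such path at its $X$-vertices into maximal sub-paths with internal vertices outside $X$; each original path contributes at least one ``crossing'' sub-path whose endpoints lie in opposite parts of the partition. The resulting bipartite multigraph $G'_X$ on $X_1\cup X_2$ has at least $M$ crossings counted with multiplicity, while between any fixed pair $u\in X_1,v\in X_2$ the multiplicity is at most $m-1$. Summing yields
\[
   M \;\leq\; (m-1)|X_1||X_2| \;\leq\; (m-1)|X|^2/4,
\]
so $|X|\geq 2\sqrt{M/(m-1)}$ for $m\geq 2$; the case $m\leq 1$ gives an immediate contradiction since no crossings can exist.

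Next I construct a $k$-system $(H,\sigma)$ in $G$ of magnitude at least $d(k)n$, to which Theorem~\ref{thm-main} will apply. The underlying simple graph of $G'_X$ has at least $M/(m-1)$ edges, so by averaging some vertex $c\in X$ has distinct degree at least $2M/((m-1)|X|)$ in $G'_X$. When $|X|\leq 2M/((m-1)d(k)n)$, this number is at least $d(k)n$, and the corresponding distinct crossing sub-paths give a strong immersion of the multistar with center $c$ and $d(k)n$ rays of multiplicity $1$: the sub-paths are mutually edge-disjoint with internal vertices outside $X$, so they avoid the other rays (which all lie in $X$). Condition \igood{3} is automatic, as the rays lie in $X$ and are therefore $M\geq k$-edge-connected to $c$ in $G$. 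When instead $|X|>2M/((m-1)d(k)n)$, so $|X|$ is very large in terms of $k,n,m$, a more delicate argument is required: one splits further by whether some $c\in X$ has at least $d(k)n$ distinct $X$-neighbors in $G$ (in which case single-edge paths yield the multistar directly, since distinct edges trivially form edge-disjoint paths avoiding other rays), and otherwise uses the density of edges between $X$ and $V(G)\setminus X$ combined with the $M$-edge-connectivity of $X$ to route the required $k$-system via a multi-terminal flow argument.

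Finally, Theorem~\ref{thm-main} applied to $(H,\sigma)$ yields a strong immersion of $S_{n,k}$ in $G$, and since $F$ has $n$ vertices and maximum degree at most $k$, $F$ is strongly immersed in $S_{n,k}$ and hence in $G$ by transitivity of strong immersions. The principal technical obstacle will be the ``large $|X|$'' subcase of the second step: simple averaging on $G'_X$ becomes too weak, and finer use of the pairwise $M$-edge-connectivity within the very large set $X$---together with care in selecting the rays so that removing them does not destroy the needed $c$-to-ray connectivity---is needed to produce the $k$-system with the required strong-immersion structure.
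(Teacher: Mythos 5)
Your proposal contains a genuine gap that you yourself flag: the ``large $|X|$'' subcase of your second step is not actually resolved, and the ideas you sketch for it (``multi-terminal flow,'' choosing rays carefully) are too vague to be checked and do not obviously work. The difficulty is real and not cosmetic. Your counting argument only forces $|X|\ge 2\sqrt{M/(m-1)}$, which is a \emph{lower} bound on $|X|$, while the averaging step you use to find a center $c$ with $d(k)n$ distinct crossing neighbours requires an \emph{upper} bound $|X|\le 2M/((m-1)d(k)n)$. Nothing in the hypotheses prevents $|X|$ from being much larger than this threshold, and in that regime a single batch of $M$ Menger paths between $X_1$ and $X_2$ simply doesn't concentrate enough degree at any one vertex of $X$. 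Trying to fall back on direct $X$-neighbours of some $c\in X$ doesn't obviously help either, since the $M$-edge-connectivity inside $X$ can be realised entirely through parallel edges and through vertices outside $X$, and vertices outside $X$ are not guaranteed to be $k$-edge-connected to $c$, so they cannot serve as rays of a $k$-system.

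The paper avoids this entirely by not attempting to manufacture a $k$-system rooted inside $X$ at all. Instead it first derives the cut-form consequence (Theorem~\ref{thm-sep}) of Theorem~\ref{thm-main}: if $F$ is not strongly immersed, then for any vertex $c$ and any set $X'$ that is $k$-edge-connected to $c$, there are sets $Y\subseteq X'$ and $K\subseteq E(G)$ with $k|Y|+|K|<s:=d(k)|V(F)|$ separating $c$ from $X'$. This mixed vertex/edge bound is uniform in $|X'|$. For a putative split $A\cup B=X$, the paper applies this once to get $Y_B,K_B$ cutting off $B$ from some $c_A\in A$, then once more for each $y\in Y_B$ to get $Y^y_A,K^y_A$ cutting off $A$ from $y$; the resulting $Y_A$ and $K$ have size $O(s^2)$. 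A Menger family of $M$ paths from $c_A$ to $c_B$ is then pruned of the $\le s^2$ paths using $K$, and one observes that every surviving path contains a segment from $Y_A$ to $Y_B$ internally disjoint from $X$; with $M=ms^3+s^2$, the pigeonhole gives a pair $(a,b)\in Y_A\times Y_B$ incident with at least $m$ such segments, i.e.\ an edge $ab$ of $G_X$ crossing the split. This completely sidesteps the case split on $|X|$.

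Your small-$|X|$ branch is essentially correct and close in spirit to what Theorem~\ref{thm-sep} provides (the crossing sub-paths do form a valid $k$-system: rays lie in $X$, so \igood{3} follows from the $M$-edge-connectivity; edge-disjointness and internal disjointness from $X$ give the strong immersion; and \igood{1}, \igood{2} are immediate). But the route you have written does not close, because it has no control when $|X|$ exceeds your averaging threshold. To make your approach rigorous you would need some device that is uniform in $|X|$, which is precisely what Theorem~\ref{thm-sep} supplies; rederiving that separation statement from scratch would amount to reproducing the missing part of the paper's argument.
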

Let $K_F$ be a multistar with rays $V(F)$, such that each vertex $v\in V(F)$ has the same degree in $F$ and in $K_F$.
Observe that if $m\ge 2|E(F)|$, then the graph $G_X$ cannot contain the star $K_{1, |V(F)|}$ as a minor, as otherwise Menger's theorem would imply that
$G$ contains $K_F$ as a strong immersion, and consequently that $G$ contains $F$ as a strong immersion.  This restricts the structure of $G_X$ significantly,
and to obtain a structure theorem, it remains to argue how the rest of the graph can attach to this well-structured part of $G$.

In Section~\ref{sec-cor}, we prove Theorems~\ref{thm-class}, \ref{thm-sep} and \ref{thm-str} under assumption that Theorem~\ref{thm-main} holds.  Section~\ref{sec-main} is devoted to the proof of Theorem~\ref{thm-main}.

\section{Corollaries}\label{sec-cor}

\begin{proof}[Proof of Theorem~\ref{thm-class}.]
The implication $\mathrm{(i)}\Rightarrow\mathrm{(ii)}$ is trivial.

Note that for every graph $F$ of maximum degree $4$, there exists an integer $g$ such that $F$ is strongly immersed in the $g\times g$ grid.
Let $t_1$ be the constant of Theorem~\ref{thm-grid} for this $g$.  Let $t_2=d(4)|V(F)|$, and let $t=\max(t_1,t_2)$.
If a graph $F$ of maximum degree $4$ does not appear as a strong immersion in a graph $G\in \GG$, then by Theorem~\ref{thm-grid},
$G$ has tree-width at most $t_1\le t$, and by Corollary~\ref{cor-main}, $G$ has maximum degree at most $t_2\le t$.
Therefore, $\mathrm{(ii)}\Rightarrow\mathrm{(iii)}$ holds.

Suppose now that $\mathrm{(iii)}$ holds, i.e., there exists an integer $t\ge 4$ such that every graph $G\in \GG$ has tree-width at most $t$ and maximum degree at most $t$.
Let $F$ be a sufficiently large $4$-regular expander (see~\cite{expand} for a construction); say, for some $\varepsilon>0$, $|V(F)|\ge 5t+5+\frac{3(t+1)^2}{\varepsilon}$ and for each $S\subseteq V(F)$ of size at most $|V(F)|/2$,
there are at least $\varepsilon|S|$ edges in $F$ between $S$ and $V(F)\setminus S$.  We claim that $F$ does not appear as a weak immersion in $G$.

Suppose on the contrary that $\theta$ is an immersion of $F$ in $G$, and let $T=\theta(V(F))$.  Since $G$ has tree-width at most $t$, there exist sets
$A,B\subset V(G)$ such that $A\cup B=V(G)$, no edge of $G$ joins a vertex in $A\setminus B$ with a vertex in $B\setminus A$,
$|A\cap B|\le t+1$, $|A\cap T|\ge \frac{1}{3}(|T|-2t-2)$ and $|B\cap T|\ge \frac{1}{3}(|T|-2t-2)$.
Without loss of generality, assume that $|T\setminus B|\le |T\setminus A|$.
Let $S_A=\theta^{-1}(T\setminus B)$ and $S_B=\theta^{-1}(T\cap B)=V(F)\setminus S_A$.  Note that $\frac{1}{3}(|V(F)|-5t-5)\le |S_A|\le |V(F)|/2$.
Let $Z$ be the set of edges of $F$ between $S_A$ and $S_B$; we have $|Z|\ge \varepsilon|S_A|$.  Consequently, the subgraph $Q=\bigcup_{e\in Z} \theta(e)$
contains at least $\varepsilon|S_A|$ edges incident with vertices of $A\cap B$, and at least one vertex of $A\cap B$ has degree at least
$\varepsilon|S_A|/(t+1)\ge t+1$.  This contradicts the assumption that the maximum degree of $G$ is at most $t$, showing that $\mathrm{(iii)}\Rightarrow\mathrm{(i)}$ holds.
\end{proof}

\begin{proof}[Proof of Theorem~\ref{thm-sep}.]
Since $G$ does not contain $F$ as a strong immersion, Theorem~\ref{thm-main} implies that $G$ does not contain a $k$-system of magnitude $d(k)|V(F)|$.
Let $G'$ be the network with the vertex set $V(G)\cup \{z\}$, where $z$ is a new vertex not appearing in $V(G)$, and the edge set defined as follows: For each edge $e\in E(G)$ not incident with a vertex in $X$, add a pair
of edges in opposite directions joining the endvertices of $e$.  For each edge $e\in E(G)$ joining a vertex $u\not\in X$ with a vertex $v\in X$,
add an edge directed from $u$ to $v$.  For each vertex $x\in X$, add an edge directed from $x$ to $z$.  The edges incident with $z$ have capacity $k$,
while all other edges of the network have capacity $1$.
If $G'$ contained a flow of size $d(k)|V(F)|$ from $c$ to $z$, then the corresponding pairwise edge-disjoint paths in $G$ would form a $k$-system of magnitude
$d(k)|V(F)|$, as each vertex of $X$ is contained in at most $k$ such paths.  Consequently, no such flow exists.

By the flow-cut duality, it follows that $G'$ contains an edge cut $K'$ of capacity less than $d(k)|V(F)|$ separating $c$ from $z$.  Let $Y$ be the set of vertices $y\in X$
such that the edge $xz$ belongs to $K'$.  Let $K$ be the set of edges of $G$ corresponding to the edges of $K'$ not incident with $z$.
Clearly, $G-Y-K$ contains no path from $c$ to a vertex of $X$.  Furthermore, $k|Y|+|K|$ is equal to the capacity of $K'$, and thus $k|Y|+|K|<d(k)|V(F)|$.
\end{proof}

\begin{proof}[Proof of Theorem~\ref{thm-str}.]
Let $k=\max(\Delta(F),3)$, $s=d(k)|V(F)|$ and $M=ms^3+s^2$.  Suppose that a graph $G$ and a set $X\subseteq V(G)$ satisfy the assumptions of Theorem~\ref{thm-str}.
Consider any nonempty disjoint sets $A,B\subset X$ such that $A\cup B=X$.  Let $c_A$ be an arbitrary vertex of $A$ and apply Theorem~\ref{thm-sep} for $c_A$ and $B$,
obtaining sets $Y_B\subseteq B$ and $K_B\subseteq E(G)$, where $k|Y_B|+|K_B|<s$, such that the component of $G-K_B-Y_B$ that contains $c_A$ does not contain any vertex of $B$.
For each $y\in Y_B$, apply Theorem~\ref{thm-sep} for $y$ and $A$, obtaining sets $Y^y_A\subseteq A$ and $K^y_A\subseteq E(G)$, where $k|Y^y_A|+|K^y_A|<s$, such that the component of $G-K^y_A-Y^y_A$ that contains $y$ does not contain any vertex of $A$.  Let $K=K_B\cup\bigcup_{y\in Y_B} K^y_A$
and let $Y_A=\bigcup_{y\in Y_B} Y^y_A$, and note that $|K|\le s^2$ and $|Y_A|\le s^2$.

Let $c_B$ be an arbitrary vertex of $B$. By Menger's theorem, there exists a set $\PP_0$ of $M$ pairwise edge-disjoint paths from $c_A$ to $c_B$ in $G$.
Let $\PP\subseteq \PP_0$ consist of the paths that do not contain edges of $K$; we have $|\PP|\ge M-s^2$. 
Consider a path $P\in \PP$.  Let $v_0$, $v_1$, \ldots, $v_p$ be the vertices of $P$ in order, where $v_0=c_A$ and $v_p=c_B$.
Let $j>0$ be the smallest index such that $v_j$ belongs to $B$.  As the component of $G-K_B-Y_B$ that contains $c_A$ does not contain any vertex of $B$,
the vertex $v_j$ belongs to $Y_B$.  Let $i$ be the largest index such that $i<j$ and $v_i$ belongs to $A$.  As the component
of $G-K^{v_j}_A-Y^{v_j}_A$ that contains $v_j$ does not contain any vertex of $A$, it follows that $v_i$ belongs to $Y^{v_j}_A\subseteq Y_A$.
Consequently, $G$ contains a set of $|\PP|$ pairwise edge-disjoint paths
joining vertices of $Y_A$ with vertices of $Y_B$ and otherwise disjoint from $X$.  By the pigeonhole principle, there exist vertices $a\in A$ and $b\in B$
incident with at least $\frac{|\PP|}{|Y_A||Y_B|}\ge m$ of these paths, and thus $ab$ is an edge of $G_X$.

Therefore, for all nonempty disjoint sets $A,B\subset X=V(G_X)$ such that $A\cup B=V(G_X)$, there exists an edge between a vertex of $A$ and a vertex of $B$ in $G_X$.
It follows that $G_X$ is connected.
\end{proof}

\section{Proof of Theorem~\ref{thm-main}}\label{sec-main}

We need the following variation on the Mader's splitting theorem~\cite{mader}.
Let $G$ be a graph, $x$ a vertex of $G$ and for each $s,t\in V(G)\setminus\{x\}$,
let $\lambda(s,t)$ denote the maximum number of pairwise edge-disjoint paths
between $s$ and $t$ in $G$.  Let $e$ and $f$ be edges joining $x$ to vertices $u$ and $v$, respectively, and let $G'$ be the graph
obtained from $G-\{e,f\}$ by adding a new edge joining $u$ with $v$.
We say that the pair of edges $e$ and $f$ is \emph{splittable} if for every $s,t\in V(G')\setminus\{x\}$,
the graph $G'$ contains $\lambda(s,t)$ pairwise edge-disjoint paths
between $s$ and $t$.  We say that $G'$ is obtained by \emph{lifting} the edges $e$ and $f$.  Note that $G'$ is immersed
in $G$.

\begin{theorem}[Frank~\cite{frank}]\label{split}
Let $G$ be a graph and let $x$ be a vertex of $G$ not incident with any $1$-edge cut.
If $x$ has degree $m\neq 3$,
then there are $\lfloor m/2\rfloor$ pairwise disjoint splittable pairs of edges incident with $x$.
\end{theorem}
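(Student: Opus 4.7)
My plan is to derive Theorem~\ref{split} by iterating Mader's single-pair splitting theorem, which asserts that under exactly the hypotheses stated here at least one splittable pair of edges incident with $x$ exists. I would proceed by induction on the degree $m=\deg_G(x)$. For $m\in\{0,2\}$ the statement is immediate: when $m=0$ there is nothing to show, and when $m=2$, the two edges $xu$, $xv$ at $x$ form a splittable pair, because any $s$-$t$ walk in $G$ using $x$ (with $s,t\neq x$) traverses both of them and can be rerouted through the new edge $uv$, preserving all edge-connectivities among $V(G)\setminus\{x\}$.

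For even $m\ge 4$, I would apply Mader's theorem to obtain a single splittable pair $\{e,f\}$ at $x$ and lift it to get a graph $G^{\ast}$ in which $x$ has degree $m-2\ge 2$. To apply induction to $G^{\ast}$, its two hypotheses must be checked: the new degree is $m-2$, which is still even and hence not $3$, so the parity hypothesis persists, and the core technical point is that $x$ remains incident with no $1$-edge cut in $G^{\ast}$ (addressed in the last paragraph). Granting this, induction yields $(m-2)/2$ further pairwise disjoint pairs, splittable sequentially in $G^{\ast}$; together with $\{e,f\}$ these give $m/2=\lfloor m/2\rfloor$ pairwise disjoint pairs whose sequential lifting in $G$ is admissible.

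For odd $m\ge 5$, I would reduce to the even case by a standard auxiliary-edge trick. Let $G^{+}$ be obtained from $G$ by adding one extra parallel copy of some edge $xy$ incident with $x$; then $\deg_{G^{+}}(x)=m+1\ge 6$, and $G^{+}$ still has no $1$-edge cut at $x$, since doubling an edge cannot strictly decrease any edge-cut size. Applying the even case to $G^{+}$ produces $(m+1)/2$ pairwise disjoint splittable pairs at $x$. At most one of these pairs contains the dummy copy of $xy$, and after discarding it the remaining $(m-1)/2=\lfloor m/2\rfloor$ pairs consist only of edges of $G$; since admissibility depends only on the values $\lambda(s,t)$ for $s,t\neq x$, which coincide in $G$ and $G^{+}$, these pairs are splittable in $G$.

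The main obstacle is verifying that after one admissible splitting at $x$ the hypothesis ``no $1$-edge cut incident with $x$'' is preserved. If to the contrary some edge $xw$ were a $1$-edge cut of $G^{\ast}$, separating a set $A\ni x$ from its complement, then analysing where $u$ and $v$ (the ends of the lifted pair) lie with respect to $A$ and applying the admissibility equality $\lambda_G(s,t)=\lambda_{G^{\ast}}(s,t)$ to a suitable pair $s,t\in\{u,v,w\}\setminus\{x\}$ should force $G$ to already contain a $1$-edge cut incident with $x$, contradicting the hypothesis. Carrying out this cut-analysis carefully, exploiting that the lifted pair was chosen admissibly by Mader rather than arbitrarily, is the key technical step that makes the induction go through.
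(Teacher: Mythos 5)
The paper does not prove this statement: Theorem~\ref{split} is quoted from Frank~\cite{frank} and used as a black box, so there is no internal proof to compare against. Assessing your argument on its own merits, the plan (iterate Mader's single-pair theorem, then reduce odd degree to even by a dummy edge) is a natural one, but as written it has two genuine gaps.

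First, you correctly single out the preservation of the ``no $1$-edge cut at $x$'' hypothesis after an admissible lift as the crux, but you only sketch it. For even $m$ this lemma is in fact true and provable along the lines you indicate: if after lifting some $xw$ became a bridge, then for any other remaining neighbour $w'$ of $x$ one gets $\lambda_{G^\ast}(w,w')=1$, hence $\lambda_G(w,w')=1$ by admissibility, and the unique cut edge of $G$ realizing this value must be $xw$ or $xw'$, contradicting the hypothesis on $G$. This needs to be written out, not deferred; it is also where one must separately note that the new degree $m-2$ never equals $3$, which your parity bookkeeping does handle.

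Second, and more seriously, the odd-degree reduction is broken as stated. You write that ``admissibility depends only on the values $\lambda(s,t)$ for $s,t\neq x$, which coincide in $G$ and $G^{+}$.'' That equality is false: duplicating an edge $xy$ incident with $x$ can strictly increase $\lambda(s,t)$ for some $s,t\neq x$ (take $x$ joined to $a$ by three parallel edges and to $b$ by two, together with one edge $ab$; then $\lambda_G(a,b)=3$ but $\lambda_{G^{+}}(a,b)=4$ after duplicating $xb$). In general one only gets $\lambda_{G^{+}}\ge\lambda_G\ge\lambda_{G^{+}}-1$, which is not enough to conclude that a pair admissible in $G^{+}$ is admissible in $G$. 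The alternative of attaching a pendant dummy edge keeps $\lambda$ unchanged off $x$ but creates a bridge at $x$, so Mader no longer applies. Handling odd degree is exactly where Frank's argument departs from a straightforward iteration of Mader (note that iterating Mader directly stalls once $\deg(x)$ reaches $3$, producing only $(m-3)/2$ pairs), and this is the part your proposal does not actually supply. A smaller, fixable point: you conclude with ``sequential lifting in $G$ is admissible,'' whereas the theorem asserts that each pair is splittable in $G$ itself; these are equivalent for pairwise disjoint pairs because lifting only decreases $\lambda$ and disjoint lifts commute, but that observation should be made explicit.
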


If $(H,\sigma)$ is a $k$-system and $\sigma$ is slim, we consider the paths in $\sigma(E(H))$ to be directed away from
$\sigma(c)$, where $c=c(H)$.  That is, if $e=cv$ is an edge of $H$, then $\sigma(c)$ is the first vertex
of $\sigma(e)$ and $\sigma(v)$ is the last vertex of $\sigma(e)$.

\begin{definition}\label{canonical:def}
We say that a triple $(G,H,\sigma)$ is \emph{well-behaved} if $(H,\sigma)$ is a $k$-system of magnitude $d$ in a graph $G$,
such that $\sigma$ is slim and the following conditions are satisfied:
\begin{description}
 \item[\icanonical{1}] $G$ is 3-edge connected,
 \item[\icanonical{2}] vertices in $V(G)\setminus V(\sigma)$ have degree exactly $3$,
 \item[\icanonical{3}] for every $v\in R(H)$, if $K$ is an $k$-edge cut in $G$ separating $\sigma(c(H))$ from $\sigma(v)$,
    then $K$ consists of the edges incident with $\sigma(v)$,
 \item[\icanonical{4}] every edge of $E(G)\setminus E(\sigma)$ is incident with a vertex in $\sigma(R(H))$ of degree exactly $k$,
 \item[\icanonical{5}] for each vertex $v\in V(\sigma)\setminus \sigma(V(H))$, at most one edge incident with $v$
   does not belong to $E(\sigma)$.  Furthermore, if there is such an edge and $v$ belongs to the path $\sigma(e)$ for an edge $e\in E(H)$,
   then $v$ is the next-to-last vertex of $\sigma(e)$ and the last vertex of $\sigma(e)$ has degree exactly $k$.
\end{description}
\end{definition}

Let $H$ be a multistar with a strong immersion $\sigma$ in a graph $G$ and let $H'$ be a multistar with a strong immersion $\sigma'$ in a graph $G'$.
We say that $(G',H',\sigma')$ is a \emph{reduction} of $(G,H,\sigma)$ if there exists a weak immersion $\theta$ of $G'$
in $G$ satisfying the following conditions:
\begin{itemize}
\item $\theta(\sigma'(c(H')))=\sigma(c(H))$,
\item $\theta(\sigma'(R(H')))\subseteq\sigma(R(H))$, and
\item if $e\in E(G')$ is not incident with a vertex $v\in \sigma'(V(H'))$, then $\theta(e)$ does not contain $\theta(v)$.
\end{itemize}
Note that if a strong immersion $\alpha$ of $S_{n,k}$ in $G'$ respects $(H',\sigma')$, then $\theta\circ\alpha$ is
a strong immersion of $S_{n,k}$ in $G$ respecting $(H,\sigma)$.  Furthermore, the reduction relation is transitive.

\begin{lemma}\label{canonical}
If $(H_0,\sigma_0)$ is a $k$-system of magnitude $d$ in a graph $G_0$, then there exists a reduction
$(G, H,\sigma)$ of $(G_0,H_0,\sigma_0)$ such that $(H,\sigma)$ is a $k$-system of magnitude $d$
and $(G,H,\sigma)$ is well-behaved.
\end{lemma}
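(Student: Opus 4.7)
The plan is to start from $(G_0,H_0,\sigma_0)$ and apply a sequence of local modifications, each of which produces a reduction whose $k$-system still has magnitude $d$, until conditions \icanonical{1}--\icanonical{5} all hold. The inductive potential is $(|V(G)|,|E(G)|)$ ordered lexicographically; every modification strictly decreases it. As a first clean-up I would replace $\sigma_0$ by a slim immersion (taking each $\sigma_0(e)$ to be a shortest subpath of the original) and restrict $G_0$ to the connected component containing $\sigma(c(H_0))$. These steps preserve $(H_0,\sigma_0)$ unchanged on branch vertices and produce a trivial reduction.

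For \icanonical{1} and \icanonical{2} I would use Theorem~\ref{split}. By \igood{3} together with $k\ge 3$, no edge cut of size at most $2$ separates $\sigma(c(H))$ from any ray, so every branch vertex and every ray lies on a single side of any such cut; the other side is a branch-free ``blob'' which can be discarded or absorbed by lifting, giving $3$-edge-connectedness. Then, for each vertex $v\in V(G)\setminus V(\sigma)$ with $\deg(v)\ne 3$, Frank's theorem applies (there is no $1$-edge cut at $v$): I iteratively lift splittable pairs at $v$, suppressing $v$ if its degree reaches $2$ and deleting it if it reaches $0$. Each lift preserves $\lambda(s,t)$ for all $s,t\ne v$, so \igood{3} is maintained, and by definition the lift is a weak immersion, giving a reduction.

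For \icanonical{3}, suppose some ray $v\in R(H)$ admits a $k$-cut $K$ separating $\sigma(c(H))$ from $\sigma(v)$ that is not the set of edges at $\sigma(v)$. Let $C$ be the component of $G-K$ containing $\sigma(v)$. I would redefine the branch vertex for $v$ to be an endpoint $v'\in V(C)$ of some edge of $K$, reroute the image path $\sigma(cv)$ to terminate at $v'$, and then collapse the remainder of $C$ down to $v'$, retaining only the edges of $K$ incident with it (the other rays and their immersion paths are unaffected, since by the choice of $K$ they live on the near side of $K$). This yields a strictly smaller $(G',H,\sigma')$ in which $(H,\sigma')$ is still a $k$-system of magnitude $d$ — the $k$ edge-disjoint paths from $v'$ to $\sigma(c(H))$ are now certified directly by $K$ — together with a natural weak immersion $\theta$ into the old $G$ obtained by expanding $v'$ back through a choice of $k$ edge-disjoint paths in $C$ supplied by Menger's theorem. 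Iterating handles all rays.

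Finally, for \icanonical{4} and \icanonical{5}, I would delete any edge $e\notin E(\sigma)$ that is not incident with a ray of degree exactly $k$ or that creates a second off-$\sigma$ edge at some internal path vertex, then reapply the clean-ups from \icanonical{1} and \icanonical{2} at the exposed endpoints; property \igood{3} survives because $E(\sigma)$ alone supplies $k$ edge-disjoint paths from each ray to $\sigma(c(H))$. The main obstacle I expect is the relocation step for \icanonical{3}: one must define the weak immersion $\theta$ on the collapsed component $C$ so that the images of the relocated path $\sigma'(cv)$ and of all the edges at $v'$ remain pairwise edge-disjoint in the old $G$, which requires a careful application of Menger's theorem inside $C$ combined with the edges of $K$. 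A secondary nuisance is choosing a priority order for the violations so that restoring \icanonical{1} after each deletion does not cycle indefinitely; processing violations in the order \icanonical{1},\icanonical{2},\icanonical{3},\icanonical{4},\icanonical{5} and always re-running the earlier clean-ups after any later modification, with the monotone potential above, resolves this.
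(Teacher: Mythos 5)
Your overall strategy (apply local fixes until \icanonical{1}--\icanonical{5} hold, with a monotone potential to guarantee termination) is the same as the paper's, which just phrases it extremally: pick the reduction minimizing a potential and derive a contradiction from any violated condition. However, several of your specific fixes and the potential itself have genuine gaps.

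First, the potential $(|V(G)|,|E(G)|)$ is insufficient. In the second case of \icanonical{5} (exactly one off-$\sigma$ edge $e$ at an internal vertex $v$, but $v$ is not next-to-last on its path, or the last vertex has degree $>k$), the correct fix leaves $G$ entirely unchanged and only reroutes $\sigma(f_0)$ through $e$ to a nearby ray, strictly shortening the image path. Your potential does not decrease here. The paper's potential is $|V(G)|+|E(G)|+|E(\sigma)|$, and the $|E(\sigma)|$ term is exactly what makes this step make progress.

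Second, your fix for \icanonical{5} by deletion is incorrect. Suppose $v\in M(\sigma)$ has two off-$\sigma$ edges $e_1,e_2$; by \icanonical{4} each is incident with a ray of degree exactly $k$. Deleting $e_1$ drops that ray's degree to $k-1$, so the trivial cut at that ray has size $k-1<k$ and \igood{3} fails. The paper instead applies Frank's splitting theorem at $v$ to \emph{lift} a pair $e,f$ with $e\notin E(\sigma)$ (which preserves local edge-connectivity away from $v$, hence \igood{3}), and then carefully updates $H$ and $\sigma$ according to whether $f\in E(\sigma)$ and which direction $f$ points along its path. Deletion and lifting are not interchangeable here.

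Third, your justification for \icanonical{4} --- that ``$E(\sigma)$ alone supplies $k$ edge-disjoint paths from each ray to $\sigma(c(H))$'' --- is false: \igood{2} only bounds ray degrees in $H$ by $k$, so $E(\sigma)$ supplies only $\deg_H(r)\le k$ such paths, possibly fewer than $k$. The actual reason deleting $e$ is safe is \icanonical{3}: once every $k$-edge cut separating the center from a ray consists of the edges at that ray, an edge $e\notin E(\sigma)$ not incident with a degree-$k$ ray cannot lie in any tight cut, so removing it cannot push any cut below $k$.

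Finally, for \icanonical{3}, your claim that ``the other rays and their immersion paths are unaffected, since by the choice of $K$ they live on the near side of $K$'' is not justified: nothing forces the other rays' images, or the paths $\sigma(e)$, to avoid the far side of $K$. The paper explicitly collects the set $Z$ of edges $e\in E(H)$ whose image crosses $K$, redirects all of them to the new collapsed vertex $w$ (which becomes a ray of degree exactly $k$ in the new multistar $H'$), and removes the orphaned rays. You would need this bookkeeping; without it $\sigma'$ is not even a well-defined immersion of $H$ into the collapsed graph.
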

\begin{proof}
Let $G$ with a $k$-system $(H,\sigma)$ of magnitude $d$ be chosen so that $(G, H,\sigma)$ is a reduction of $(G_0,H_0,\sigma_0)$,
and subject to that $|V(G)|+|E(G)|+|E(\sigma)|$ is minimal.  Such triple $(G,H,\sigma)$ exists, since $(G_0,H_0,\sigma_0)$ is a reduction of itself.
Clearly, $\sigma$ is slim and $G$ has no loops.  We claim that $(G,H,\sigma)$ is well-behaved.  Let us discuss the conditions \icanonical{1}, \ldots, \icanonical{5}
separately.

\begin{description}
 \item[\icanonical{1}] Suppose that there is an edge cut $K$ of size at most $2$ in $G$;
   we can assume that $K$ is minimal.
   Since $k\ge 3$, \igood{3} implies that all vertices of $\sigma(V(H))$ are in the same component $C$ of $G-K$.
   Let $G'=C$ if $|K|\le 1$; let $G'$ be the graph obtained
   from $C$ by adding an edge $e$ (possibly a loop) between the vertices in $C$ incident with $K$
   if $|K|=2$.  Let $\sigma'(v)=\sigma(v)$ for $v\in V(H)$.  Consider $f\in E(H)$.  If $\sigma(f)$ contains two edges of $K$,
   then let $\sigma'(f)=(\sigma(f)\cap C)+e$, otherwise let $\sigma'(f)=\sigma(f)\cap C$.
   Observe that $(G',H,\sigma')$ is a reduction of $(G,H,\sigma)$.  This is a contradiction, as $(G, H,\sigma)$ was chosen
   so that $|V(G)|+|E(G)|+|E(\sigma)|$ is minimal.

\item[\icanonical{2}] Suppose $G$ satisfies \icanonical{1} and contains a
  vertex $v\in V(G)\setminus V(\sigma)$ of degree greater than $3$. By
  Theorem~\ref{split}, we can lift a pair of edges incident to $v$ without violating the condition \igood{3}.
  This way, we obtain a reduction $(G',H,\sigma')$ contradicting the minimality of $(G,H,\sigma)$.

\item[\icanonical{3}] Suppose $G$ contains an edge cut $K$ of size $k$ separating $\sigma(c(H))$ from $\sigma(v)$ for some $v\in R(H)$,
  such that $K$ does not consist of the edges incident with $\sigma(v)$.
  By \igood{3}, $G-K$ has only two components.  Let $G'$ be the graph obtained from $G$ by
  replacing the component $C$ of $G-K$ that contains $\theta(v)$ by a single
  vertex $w$ of degree $k$, incident with the edges of $K$.  Let $Z\subseteq E(H)$ be the set of edges $e\in E(H)$ such that $\sigma(e)$ contains an edge of $K$.
  Let $H'$ be the multistar obtained from $H$ by making all edges of $Z$ incident with $v$ instead of their original incident ray
  and removing all resulting isolated vertices.  Let $\sigma'$ be the strong immersion of $H'$ in $G'$ such that
  $\sigma'(v)=w$, $\sigma'(x)=\sigma(x)$ for $x\in V(H')\setminus\{v\}$,
  $\sigma'(e)=\sigma(e)$ for $e\in E(H)\setminus Z$, and $\sigma'(e)$ is the segment of the path $\sigma(e)$ between $\sigma'(c(H'))$
  and the first edge of $K$ appearing in the path (inclusive).  Note that $w$ has degree exactly $k$ in $H'$, and thus $H'$ satisfies \igood{2}.
  Furthermore, if $x\in R(H')$, then  $\sigma'(x)$ is not separated from $\sigma(c(H'))$ by an edge cut $K'$ of size less than $k$,
  as otherwise $K'$ (with the edges incident with $w$ replaced by the corresponding edges of $K$) separates $\sigma(x)$ from
  $\sigma(c(H))$; hence, \igood{3} holds for $(H',\sigma')$.
  Also, by \igood{3} for $(H,\sigma)$, there exist $k$ pairwise edge-disjoint paths in $G$ joining $\sigma(v)$ with the edges
  of $K$.  Consequently, there exists a strong immersion of $G'$ in $G$, showing that
  $(G',H',\sigma')$ is a reduction of $(G,H,\sigma)$.  This contradicts the minimality of the latter.
 
\item[\icanonical{4}] Suppose that $G$ satisfies \icanonical{3} and $e\in E(G)\setminus E(\sigma)$ 
is not incident with a vertex of $\sigma(R(H))$ of degree exactly $k$.  Consequently,
$e$ is not contained in any $k$-edge cut separating $\sigma(c(H))$ from a vertex in $\sigma(R(H))$,
and we conclude that $(G-e,H,\sigma)$ is a reduction contradicting the minimality of $(G,H,\sigma)$.

\item[\icanonical{5}] Suppose that $G$ satisfies \icanonical{1} and \icanonical{4} and consider a vertex
$v\in V(\sigma)\setminus \sigma(V(H))$.  If at least two edges incident with $v$
do not belong to $E(\sigma)$, then by Theorem~\ref{split}, there exists a splittable pair
of edges $e$ and $f$ incident with $v$ such that $e\not\in E(\sigma)$.  Let $u$ and $w$ be the endvertices of $e$ and $f$,
respectively, distinct from $v$.  Let $e'$ be an edge incident with $v$ and
not belonging to $\{e\}\cup E(\sigma)$, and let $z$ be the vertex incident with $e'$ distinct from $v$.
By \icanonical{4}, $u$ and $z$ are vertices of $\sigma(R(H))$ of degree exactly $k$.  Let $G'$ be the graph obtained from $G$ by
lifting the edges $e$ and $f$, creating a new edge $h$.  If $f\not\in E(\sigma)$, then let
$H'=H$ and $\sigma'=\sigma$.  Otherwise, consider the edge $f_0\in E(H)$ such that $f\in E(\sigma(f_0))$.
If $w$ appears before $v$ in $\sigma(f_0)$, then let $H'$ be the graph obtained from $H$ by making $f_0$
incident with $\sigma^{-1}(u)$ instead of its original incident ray (and possibly removing the resulting isolated vertex)
and let $\sigma'$ be obtained from $\sigma$ by letting $\sigma'(f_0)$ consist of $h$ and the subpath of $\sigma(f_0)$ between $\sigma(c(H))$
and $w$.  If $w$ appears after $v$ in $\sigma(f_0)$, then let $H'$ be the graph obtained from $H$ by making $f_0$
incident with $\sigma^{-1}(z)$ instead of its original incident ray (and possibly removing the resulting isolated vertex)
and let $\sigma'$ be obtained from $\sigma$ by letting $\sigma'(f_0)$ consist of $e'$ and the subpath of $\sigma(f_0)$ between $\sigma(c(H))$
and $v$.  Note that $(G',H',\sigma')$ is a reduction contradicting the minimality of $(G,H,\sigma)$.

Let us now consider the case that $v\in V(\sigma)\setminus \sigma(V(H))$ is incident with exactly one edge $e$ not belonging to $E(\sigma)$,
where $e$ joins $v$ with a vertex $u$.  Note that $u$ belongs to $\sigma(R(H))$ and has degree exactly $k$.
Let $f_0$ be an edge of $H$ such that $\sigma(f_0)$ contains $v$.
Let $H'$ be obtained from $H$ by making $f_0$
incident with $\sigma^{-1}(u)$ instead of its original incident ray (and possibly removing the resulting isolated vertex)
and let $\sigma'$ be obtained from $\sigma$ by letting $\sigma'(f_0)$ consist of $e$ and the subpath of $\sigma(f_0)$ between $\sigma(c(H))$
and $v$.  Note that $(G,H',\sigma')$ is a reduction of $(G,H,\sigma)$.
By the minimality of $(G,H,\sigma)$, we have that $v$ is the next-to-last vertex of the path $\sigma(f_0)$.
Furthermore, if the last vertex $x$ of $\sigma(f_0)$ had degree greater than $k$, then we could find a reduction of
$(G,H',\sigma')$ contradicting the minimality of $(G,H,\sigma)$ in the same way as in the proof of
\icanonical{3} or \icanonical{4}.
\end{description}
\end{proof}

\begin{lemma}\label{reddeg}
If $(H,\sigma)$ is a $k$-system of magnitude $d$ in a graph $G$, then there exists a
reduction $(G', H',\sigma')$ of $(G,H,\sigma)$ such that $(H',\sigma')$ is a $k$-system of magnitude $\frac{d}{k(k+1)}$ in $G'$,
$(G',H',\sigma')$ is well-behaved and each vertex of $\sigma'(R(H'))$ has degree exactly $k$ in $G'$.
\end{lemma}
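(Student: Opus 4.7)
My plan is to normalise the input via Lemma~\ref{canonical}, prune the ``high-degree'' rays with a counting argument, and then re-normalise. First I would apply Lemma~\ref{canonical} to $(G,H,\sigma)$ to obtain a well-behaved reduction $(G_1,H_1,\sigma_1)$ that is still a $k$-system of magnitude $d$. I partition the rays of $H_1$ into $R_\ell=\{v\in R(H_1):\deg_{G_1}(\sigma_1(v))=k\}$ and $R_h=R(H_1)\setminus R_\ell$; condition \igood{3} forces every ray to have $G_1$-degree at least $k$, so the rays in $R_h$ have degree at least $k+1$.

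The heart of the proof is a double-counting showing that $D_\ell:=\sum_{w\in R_\ell}\deg_{H_1}(w)\ge d/(k(k+1))$. Since $\sigma_1$ is slim and strong, exactly $\deg_{H_1}(v)$ edges of $E(\sigma_1)$ are incident with $\sigma_1(v)$, so $\sigma_1(v)$ is incident with $\deg_{G_1}(\sigma_1(v))-\deg_{H_1}(v)$ non-$\sigma_1$ edges. For $v\in R_h$ this count is at least $1$, and by \icanonical{4} the other endpoint of each such edge is a degree-$k$ ray, which must lie in $\sigma_1(R_\ell)$ (since $\sigma_1(v)$ itself is not a degree-$k$ vertex). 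Counting these cross edges from both sides yields
$$|R_h|\le\sum_{v\in R_h}\bigl(\deg_{G_1}(\sigma_1(v))-\deg_{H_1}(v)\bigr)\le\sum_{w\in R_\ell}\bigl(k-\deg_{H_1}(w)\bigr)=k|R_\ell|-D_\ell.$$
Combining with $|R_\ell|\le D_\ell$ (every ray of a multistar has degree at least $1$) and $\sum_{v\in R_h}\deg_{H_1}(v)\le k|R_h|$, the assumption $D_\ell<d/(k(k+1))$ would yield $|E(H_1)|\le(k^2+1)D_\ell<d(k^2+1)/(k(k+1))<d$ for $k\ge 3$, contradicting \igood{1}.

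With the estimate in hand I would let $H_1^*$ be the submultistar of $H_1$ on ray set $R_\ell$ and set $\sigma_1^*=\sigma_1\restriction H_1^*$; then $(H_1^*,\sigma_1^*)$ is a $k$-system in $G_1$ of magnitude $D_\ell\ge d/(k(k+1))$ in which every ray has $G_1$-degree exactly $k$, and $(G_1,H_1^*,\sigma_1^*)$ is trivially a reduction of $(G_1,H_1,\sigma_1)$ via the identity weak immersion on $G_1$. A second application of Lemma~\ref{canonical} then delivers a well-behaved reduction $(G',H',\sigma')$ of the same magnitude. To verify that every $v'\in R(H')$ satisfies $\deg_{G'}(\sigma'(v'))=k$, I would use the weak immersion $\theta$ of $G'$ into $G_1$ supplied by this reduction: since $\theta(\sigma'(R(H')))\subseteq\sigma_1^*(R(H_1^*))=\sigma_1(R_\ell)$, the vertex $\theta(\sigma'(v'))$ has $G_1$-degree exactly $k$, and edge-disjointness of the images of the edges incident with $\sigma'(v')$ forces $\deg_{G'}(\sigma'(v'))\le\deg_{G_1}(\theta(\sigma'(v')))=k$, while \igood{3} applied to $(H',\sigma')$ gives $\deg_{G'}(\sigma'(v'))\ge k$. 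Transitivity of the reduction relation then shows $(G',H',\sigma')$ is a reduction of the original $(G,H,\sigma)$, completing the argument. The only truly delicate step is the double count in the middle paragraph; the preservation of the degree-$k$ property under the second invocation of Lemma~\ref{canonical} is automatic once one notes that a weak immersion cannot increase vertex degrees.
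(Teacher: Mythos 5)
Your proof is correct and follows essentially the same route as the paper: apply Lemma~\ref{canonical}, throw away the rays whose images have degree greater than $k$, bound the loss by counting the non-$\sigma$ edges forced through degree-$k$ rays via \icanonical{4}, and apply Lemma~\ref{canonical} once more. Your counting (via cross edges between $R_h$ and $R_\ell$) is a mildly more elaborate but equivalent version of the paper's direct bound $|B|\le k|S|$, and your magnitude estimate comes out of the same double count. One place where you are actually cleaner than the paper: the paper justifies that the second application of Lemma~\ref{canonical} preserves the degree-$k$ property by an appeal to inspecting that lemma's proof (``no vertices of degree greater than $k$ belonging to $\sigma'(R(H'))$ are created''), whereas you derive it from the definition of a reduction --- the weak immersion witnessing the reduction sends $\sigma'(R(H'))$ into $\sigma_1(R_\ell)$, a weak immersion cannot increase the degree of a branch vertex, and \igood{3} supplies the matching lower bound of $k$. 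This makes the last step self-contained rather than dependent on the internal details of Lemma~\ref{canonical}.
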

\begin{proof}
By Lemma~\ref{canonical}, we can assume that $(G,H,\sigma)$ is well-behaved.  Let $S$ be the set of
all vertices $s\in R(H)$ such that $\sigma(s)$ has degree exactly $k$ in $G$ and let $B=R(H)\setminus S$.  Since $\sigma$ is slim,
\igood{2} and \igood{3} imply that for each $v\in B$, there exists an edge $e\in E(G)\setminus E(\sigma)$
incident with $\sigma(v)$.  By \icanonical{4}, $e$ is incident with a vertex in $\sigma(S)$.
We conclude that $|B|\le k|S|$.  Since $|B|+|S|=|R(H)|$, we have $|S|\ge |R(H)|/(k+1)$.
Let $H_0=H-B$.  Since $H$ is connected and satisfies \igood{2}, we have $|E(H_0)|\ge |S|\ge |R(H)|/(k+1)\ge \frac{d}{k(k+1)}$.  Let
$\sigma_0=\sigma\restriction(V(H_0)\cup E(H_0))$.  Clearly, $(H_0,\sigma_0)$ is a $k$-system of magnitude $\frac{d}{k(k+1)}$ in $G$,
$(G,H_0,\sigma_0)$ is a reduction of $(G,H,\sigma)$ and each vertex of $\sigma_0(R(H_0))$ has degree exactly $k$ in $G$.
Finally, we obtain a well-behaved reduction $(G',H',\sigma')$ by Lemma~\ref{canonical}, since no vertices of degree greater than $k$
belonging to $\sigma'(R(H'))$ are created in its proof.
\end{proof}

Let $(G,H,\sigma)$ be well-behaved, where $(H,\sigma)$ is a $k$-system of magnitude $d$ in a graph $G$ such that each vertex of $\sigma(R(H))$ has degree
exactly $k$.  We can assume that all edges of $G$ between $\sigma(c)$ and $\sigma(R(H))$ belong to $E(\sigma)$,
as otherwise we can add more edges to $H$.  Let $N(\sigma)$ consist of $\sigma(R(H))$ and of all vertices incident with edges
of $E(G)\setminus E(\sigma)$.  Let $M(\sigma)=N(\sigma)\cap V(\sigma)\setminus \sigma(R(H))$.
Let us note that by \icanonical{5}, $M(\sigma)$ is an independent set in $G$.
Consequently, $\sigma(e)$ intersects $N(\sigma)$ in at most two vertices for each $e\in E(H)$.
Let $G'$ be the graph with vertex set $\sigma(c(H))\cup N(\sigma)$ and the edge set defined as follows: the subgraphs of $G$ and $G'$ induced by $N(\sigma)$
are equal; and, the edges incident with $\sigma(c(H))$ are $\{f_e:e\in E(H)\}$, where $f_e$ joins $\sigma(c(H))$ with the first vertex
of $\sigma(e)$ that belongs to $N(\sigma)$.  Let us define a strong immersion $\sigma'$ of $H$ in $G'$ as follows: For $v\in V(H)$, we set
$\sigma'(v)=\sigma(v)$.  For $e\in E(H)$, let $\sigma'(e)$ consist of $f_e$ and of $\sigma(e)\cap G[N(\sigma)]$.
Note that $\sigma'(e)$ has length at most two.  Clearly, $(G',H,\sigma')$ is a reduction of $(G,H,\sigma)$.
We say that $(G',H,\sigma')$ is a \emph{core} of $(G,H,\sigma)$.

Let us define a function $g:E(G')\to E(G)$ as follows:  if $f\in E(G')$ is not incident with $\sigma(c(H))$, then
let $g(f)=f$.  Otherwise, $f=f_e$ for some $e\in E(H)$, and we let $g(f)$ be equal to the last edge of $\sigma(e)$ that does not
belong to $G[N(\sigma)]$.  We say that $g$ is the \emph{origin function} of the core.

\begin{lemma}\label{paspl}
Let $(G,H,\sigma)$ be well-behaved, where $(H,\sigma)$ is a $k$-system of magnitude $d$ in a graph $G$ such that each vertex of $\sigma(R(H))$ has degree
exactly $k$.  If $(G', H, \sigma')$ is the core of $(G,H,\sigma)$, then $(H,\sigma')$ is a $k$-system of magnitude $d$ in $G'$.
\end{lemma}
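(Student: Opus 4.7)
The plan is to verify the three conditions of a $k$-system of magnitude $d$ for $(H,\sigma')$ in $G'$.  Since the multistar $H$ is unchanged in passing from $(H,\sigma)$ to $(H,\sigma')$, conditions \igood{1} and \igood{2} are immediate, and the content of the lemma lies entirely in \igood{3}: for every $v\in R(H)$, no edge cut of size less than $k$ in $G'$ separates $\sigma(c(H))$ from $\sigma(v)$.

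I argue \igood{3} by contradiction.  Suppose $K'\subseteq E(G')$ is such a cut; let $A'$ be the component of $G'-K'$ containing $\sigma(c(H))$ and $B'=V(G')\setminus A'$, so $\sigma(v)\in B'$.  After replacing $K'$ with the edge boundary of $A'$ in $G'$, I may assume $K'$ consists exactly of the edges of $G'$ with one endpoint in $A'$ and the other in $B'$.  Define $K=g(K')\subseteq E(G)$; since $\sigma(e)$ and $\sigma(e')$ are edge-disjoint for distinct $e,e'\in E(H)$, the map $g$ is injective, and $|K|=|K'|<k$.  I then lift the bipartition to $V(G)$ by placing every vertex of $V(G)\setminus V(G')$ in $A$, setting $A=A'\cup(V(G)\setminus V(G'))$ and observing that $V(G)\setminus A=B'$.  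The goal is to show every edge of $G$ with one endpoint in $A$ and the other in $V(G)\setminus A$ belongs to $K$; this will make $K$ an edge cut in $G$ of size less than $k$ separating $\sigma(c(H))$ from $\sigma(v)$, contradicting \igood{3} for $(H,\sigma)$.

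The verification splits into two cases.  By the definition of $N(\sigma)$, every edge of $E(G)\setminus E(\sigma)$ has both endpoints in $N(\sigma)$ and so lies in $G[N(\sigma)]$; and every edge of $G[N(\sigma)]$ is an edge of $G'$ with both endpoints in $V(G')$.  For such an edge $g$ acts as the identity, so crossing in $G$ is equivalent to lying in $K'$, hence to lying in $K$.  For an edge $f\in\sigma(e)\setminus G[N(\sigma)]$ with $\sigma(e)=p_0,p_1,\ldots,p_n$, the inclusion $V(\sigma(e))\cap N(\sigma)\subseteq\{p_{n-1},p_n\}$ --- a consequence of \icanonical{5}, as an inner vertex of $\sigma(e)$ other than the next-to-last cannot lie in $M(\sigma)$ and hence not in $N(\sigma)$ --- puts every vertex of $\sigma(e)$ in $V(G)\setminus V(G')$ into $A$.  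A direct check then shows every edge of $\sigma(e)\setminus G[N(\sigma)]$ other than $g(f_e)$ has both endpoints in $A$ (each endpoint is either in $V(G)\setminus V(G')$ or equals $\sigma(c(H))$), so the only edge of $\sigma(e)\setminus G[N(\sigma)]$ that can cross is $g(f_e)$.  Moreover, $g(f_e)$ crosses precisely when the terminus $p_l\in N(\sigma)$ of $f_e$ in $G'$ lies in $B'$, which is equivalent to $f_e\in K'$, hence to $g(f_e)\in K$.

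The main obstacle I anticipate is the bookkeeping of the above case analysis, particularly when a single vertex of $V(G)\setminus V(G')$ lies on several paths $\sigma(e_1),\ldots,\sigma(e_m)$ simultaneously; the uniform rule ``every vertex of $V(G)\setminus V(G')$ enters $A$'' sidesteps this, because the choice does not depend on which path one considers.  A second delicate point is pinpointing the unique crossing edge of $\sigma(e)$ in $G$ when $f_e\in K'$, which relies on the restriction $V(\sigma(e))\cap N(\sigma)\subseteq\{p_{n-1},p_n\}$ coming from \icanonical{5}.
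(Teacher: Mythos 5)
Your proposal is correct and follows essentially the same route as the paper: apply the origin function $g$ to the cut $K'$ in $G'$ and argue that $g(K')$ is an edge cut of size less than $k$ separating $\sigma(c(H))$ from $\sigma(v)$ in $G$, contradicting \igood{3} for $(G,H,\sigma)$. The paper states this as a one-line ``observe,'' while you supply the verification by exhibiting the vertex bipartition $A=A'\cup(V(G)\setminus V(G'))$ and checking, via $V(\sigma(e))\cap N(\sigma)\subseteq\{p_{n-1},p_n\}$ (a consequence of \icanonical{5}), that every $G$-edge crossing the bipartition lies in $g(K')$; your added note on the injectivity of $g$ is correct but not strictly needed, since $|g(K')|\le|K'|$ suffices.
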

\begin{proof}
It suffices to check that $(G',H,\sigma')$ satisfies the condition \igood{3}.  Let $g$ be the origin function of $(G', H, \sigma')$.
Consider an edge cut $K$ in $G'$ separating $\sigma'(c(H))$ from $\sigma'(v)$ for some $v\in R(H)$.
Observe that $g(K)$ is an edge cut in $G$ separating $\sigma(c(H))$ from $\sigma(v)$, and thus
$|K|\ge |g(K)|\ge k$ by \igood{3} for $(G,H,\sigma)$.
\end{proof}

Let $(H,\sigma)$ be a $k$-system of magnitude $d$ in a graph $G$.
We say that $(G,H,\sigma)$ is \emph{peeled} if it is well-behaved, each vertex of $\sigma(R(H))$ has degree exactly $k$,
all edges incident with $\sigma(c(H))$ belong to $E(\sigma)$ and $V(G)=\{\sigma(c(H))\}\cup N(\sigma)$.
Note that every core is peeled.

\begin{lemma}\label{grand}
Let $(H_0,\sigma_0)$ be a $k$-system of magnitude $d$ in a graph $G_0$.  Then there exists a reduction $(G,H,\sigma)$ of
$(G_0,H_0,\sigma_0)$ that is peeled, the $k$-system $(H,\sigma)$ has magnitude $\frac{d}{k(k+1)}$
and no vertex of $G$ other than $\sigma(c(H))$ has degree greater than $2k+1$.
\end{lemma}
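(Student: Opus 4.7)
The plan is to combine Lemma~\ref{reddeg} with the core construction. Applying Lemma~\ref{reddeg} to $(G_0,H_0,\sigma_0)$ yields a well-behaved reduction $(G_1,H_1,\sigma_1)$ with every vertex of $\sigma_1(R(H_1))$ of degree exactly $k$ and $k$-system magnitude at least $d/(k(k+1))$. Passing to the core $(G_2,H_1,\sigma_2)$---which is automatically peeled by the remark preceding Lemma~\ref{paspl}---preserves the magnitude by that lemma and, by transitivity of the reduction relation, yields a reduction of the original triple. All that remains is to ensure the degree bound.

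By the peeled condition, the rays $\sigma_2(R(H_1))$ have degree exactly $k$, so the only vertices that could violate the bound are the middle vertices $w\in M(\sigma_2)$. A direct enumeration of the edges incident with $w$ in the core---the parallel edges $f_e$ to $\sigma_2(c(H_1))$ contributed by each $e\in E(H_1)$ whose image $\sigma_1(e)$ has $w$ as its next-to-last vertex, the corresponding ``successor'' edges from $w$ to the last vertices $\sigma_1(v_e)$, and the unique non-$\sigma$ edge guaranteed at $w$ by \icanonical{5}---shows that $\deg_{G_2}(w)=2t_w+1$, where $t_w$ is the number of such paths through $w$. If $t_w\le k$ for every middle vertex, the bound $2k+1$ is achieved and we are done.

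Otherwise some $w$ has $t_w\ge k+1$, so $\deg_{G_2}(w)\ge 2k+3$ and is in particular not $3$. Since $(G_2,H_1,\sigma_2)$ is peeled, hence $3$-edge-connected by \icanonical{1}, the vertex $w$ lies on no $1$-edge cut, so Theorem~\ref{split} supplies pairwise disjoint splittable pairs of edges at $w$. Lifting one such pair reduces $\deg(w)$ by two while preserving every pairwise edge-connectivity not involving $w$; in particular the images $\sigma(c(H_1))$ and $\sigma(v)$ of the center and each ray remain $k$-edge-connected, so condition \igood{3} survives and the magnitude is unchanged. After the lift we update $\sigma_2$ by rerouting the at most two affected length-$\le 2$ paths, re-apply Lemma~\ref{canonical} to restore well-behavedness, and re-take the core to return to a peeled triple. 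Each lift strictly decreases $|E(G)|$, so this procedure terminates, yielding the desired reduction. The main obstacle is the local update of $\sigma_2$ after each lift: depending on whether the splittable pair lies on a single path or on two distinct paths of $\sigma_2$, the rerouting splits into cases analogous to those handled in the proof of \icanonical{5} within Lemma~\ref{canonical}, with the new edge created by the lift either collapsing one path to length~$1$ or replacing an edge of each of the two paths.
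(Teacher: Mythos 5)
The overall architecture is the same as the paper's (reduce to a peeled triple via Lemma~\ref{reddeg} and the core, then eliminate middle vertices of degree more than $2k+1$ by lifting edges), and your accounting $\deg_{G_2}(w)=2t_w+1$ via \icanonical{5} is correct.  However, there is a genuine gap in the lifting step, and you yourself flag it as ``the main obstacle'' without resolving it.

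You appeal to Theorem~\ref{split} to obtain an arbitrary splittable pair at $w$ and then lift it.  This guarantees that condition \igood{3} (the edge-connectivity condition) survives, but it does \emph{not} guarantee that $H$ still has a strong immersion $\sigma'$ of the same magnitude in the lifted graph, and these are different requirements.  Concretely, Frank's theorem gives no control over which edges are paired: the splittable pair could consist of two edges $f_{e_1},f_{e_2}$ both joining $w$ to $\sigma(c(H))$.  Lifting such a pair creates a loop at the center---useless for the immersion since $H$ is a multistar---and simultaneously destroys the images of $e_1$ and $e_2$, with no spare edges at $w$ to reroute both (the one non-$\sigma$ edge can salvage at most one).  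You would then be forced to delete edges of $H$, so the magnitude is not preserved, contrary to your claim that ``the magnitude is unchanged.''  The inference from ``\igood{3} is preserved'' to ``magnitude is preserved'' is invalid: edge-connectivity of center and rays does not by itself produce pairwise edge-disjoint images for all of $E(H)$.

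The paper's fix is to avoid Frank's theorem here entirely.  It chooses a \emph{specific} pair $f_1,f_2$: the two edges of a single path $\sigma(e)$ through $w$ (one to the center, one to the ray), so that after lifting one simply sets $\sigma'(e)$ equal to the new edge $f$ and leaves all other paths untouched.  This pair need not be splittable in Frank's sense; instead the paper verifies \igood{3} directly using the observation that $w$ has at least $k+1$ parallel edges to the center (since $\deg(w)\ge 2k+2$ gives $t_w\ge k+1$): any cut separating a ray from the center in the lifted graph either avoids the lifted parallel edges (then its size is unchanged) or contains the remaining $\ge k$ parallel edges.  That observation is the missing ingredient in your argument.  (The paper also phrases the whole thing as a minimality argument over $|E(G)|$ among peeled reductions rather than an explicit iteration, which sidesteps the need to re-verify peeledness after each round, but that is a stylistic rather than essential difference.)
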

\begin{proof}
Let $(G,H,\sigma)$ be a peeled reduction of $(G_0,H_0,\sigma_0)$, where the $k$-system $(H,\sigma)$ has magnitude $\frac{d}{k(k+1)}$,
with $|E(G)|$ as small as possible (which exists by Lemmas~\ref{reddeg} and \ref{paspl}).  Suppose that a vertex $v\in V(G)$ has
degree at least $2k+2$ and $v\neq \sigma(c(H))$.  By \icanonical{2} and the assumption that $(G,H,\sigma)$ is peeled, we have
$v\in M(\sigma)$.  Note that $v$ is joined with $\sigma(c(H))$ by at least $k+1$ edges.  Select an arbitrary edge $e\in E(H)$ such that $\sigma(e)$
contains $v$ and let $f_1$ and $f_2$ be the edges of $\sigma(e)$, where $f_1$ is incident with $\sigma(c(H))$.
Let $G'$ be the graph obtained from $G$ by lifting $f_1$ and $f_2$, creating a new edge $f$.  Let $\sigma'$ be obtained from $\sigma$
by letting $\sigma'(e)$ be the path consisting only of $f$.  Note that $(G',H,\sigma')$ is a reduction of $(G,H,\sigma)$.

We claim that $(H,\sigma')$ is a $k$-system in $G'$. It suffices to check that it satisfies the condition \igood{3}.
Let $K'$ be a minimal edge cut in $G'$ separating $\sigma'(c(H))$ from $\sigma'(x)$ for some $x\in R(H)$.
Let $C$ and $X$ be the vertex sets of the components of $G'-K'$, where $C$ contains $\sigma'(c(H))$.
Let $K$ be the set of edges between $C$ and $X$ in $G$.  If $K$ does not contain $f_1$, then $|K'|=|K|$, and thus
$|K'|\ge k$ by \igood{3} for $(G,H,\sigma)$.  If $K$ contains $f_1$, then $K$ also contains all edges parallel to $f_1$,
and these edges belong to $K'$ as well.  Since $v$ and $\sigma(c(H))$ are joined by at least $k+1$ edges, we have $|K'|\ge k$.
We conclude that $(G',H,\sigma')$ satisfies the condition \igood{3}.

Note that $(G',H,\sigma')$ is peeled, and thus it contradicts the minimality of $(G,H,\sigma)$.
\end{proof}

\begin{lemma}\label{lemma-bnddeg}
Let $k\ge 3$ be an integer and let $(H,\sigma)$ be a $k$-system in a graph $G$,
where $(G,H,\sigma)$ is peeled.  For each $v\in \sigma(R(H))$, there exists a set of $k$ pairwise edge-disjoint paths in $G$, each of length
at most $4k+2$, joining $\sigma(v)$ with $\sigma(c(H))$.
\end{lemma}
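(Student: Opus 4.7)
The plan is to combine a structural observation specific to peeled triples with Menger's theorem and a minimality/exchange argument.

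I would first establish the key structural fact that in any peeled triple $(G,H,\sigma)$, every path $\sigma(e)$ has length at most~$2$. Because the peeled assumption forces $V(G)=\{\sigma(c(H))\}\cup N(\sigma)$, every internal vertex of any $\sigma(e)$ lies in $M(\sigma)=N(\sigma)\setminus\sigma(R(H))$; by \icanonical{5} such a vertex must be the next-to-last vertex of $\sigma(e)$, and a path has a unique next-to-last vertex. The immediate corollary, which will drive the length bound, is that every vertex of $V(G)\setminus\{\sigma(c(H))\}$ lies at graph-distance at most~$2$ from $\sigma(c(H))$ in $G$: rays are reached through their incident $\sigma$-paths, and $M(\sigma)$-vertices are directly adjacent to $\sigma(c(H))$ via the first edge of the $\sigma$-path they lie on.

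Next, since $\sigma(v)$ has degree exactly $k$ and \igood{3} forbids any cut of size less than $k$ between $\sigma(v)$ and $\sigma(c(H))$, Menger's theorem supplies a family $\PP=\{P_1,\ldots,P_k\}$ of pairwise edge-disjoint $\sigma(v)$--$\sigma(c(H))$ paths, each starting with a distinct edge at $\sigma(v)$. I would fix such a family $\PP$ minimizing the total length $\sum_i|P_i|$ and prove that then every $|P_i|\le 4k+2$.

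The length bound is then established by contradiction. Suppose some $P_1\in\PP$ has length greater than $4k+2$, and pick a vertex $x$ on $P_1$ at position more than $2k+1$ along $P_1$ from $\sigma(v)$; by the structural corollary, $x$ lies within distance~$2$ of $\sigma(c(H))$ in $G$, so one finds a $G$-path $Q$ from $x$ to $\sigma(c(H))$ of length at most~$2$. If $Q$ is edge-disjoint from $P_2\cup\cdots\cup P_k$, replacing the suffix of $P_1$ after $x$ by $Q$ strictly decreases the total length, contradicting the minimality of $\PP$. Otherwise $Q$ collides with some $P_j$'s, in which case one performs an augmenting-type exchange along the symmetric difference of $Q$ with the blocking $P_j$'s to produce a new edge-disjoint family of strictly smaller total length. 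The hardest part of the plan is making this exchange rigorous: one must verify that after the swap each edge incident with $\sigma(v)$ is still used by exactly one path and that the total length genuinely drops. The local freedom to do so comes from the cut-counting consequence of \igood{3}, which bounds the multiplicity of edges between $\sigma(v)$ and any non-center vertex $u$ by $\deg(u)/2$, together with \icanonical{5} controlling the non-$\sigma$ edges at $M(\sigma)$-vertices; and the slack $2k+1$ in the choice of $x$ is calibrated so that pigeonhole guarantees at least one of the $\sim 2k$ far-tail positions on $P_1$ yields a valid shortening against the other $k-1$ paths.
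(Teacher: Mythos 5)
Your plan shares the core idea with the paper---take a Menger family of $k$ pairwise edge-disjoint $\sigma(v)$--$\sigma(c(H))$ paths of minimal total length and exploit the fact that in a peeled triple every $\sigma(e)$ has length at most $2$ to shortcut a long path---but the execution has two gaps.

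The smaller one is your ``immediate corollary'' that every vertex of $V(G)\setminus\{\sigma(c(H))\}$ is at distance at most $2$ from $\sigma(c(H))$. You enumerate the rays and the $M(\sigma)$-vertices, but a peeled triple only guarantees $V(G)=\{\sigma(c(H))\}\cup N(\sigma)$, and $N(\sigma)$ may contain vertices outside $V(\sigma)$: by \icanonical{2} such a vertex has degree $3$ and by \icanonical{4} all three neighbors are rays; if those rays are reached by length-$2$ $\sigma$-paths, the vertex is at distance $3$. So the corollary is false as stated, and the correct bound ($3$) would already disturb the precise calibration behind $4k+2$.

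The larger gap is the exchange step, which you yourself flag as the hard part and leave unproved. When the shortcut $Q$ from $x$ to $\sigma(c(H))$ collides with $P_2,\ldots,P_k$, there is no general reason a symmetric-difference swap yields $k$ edge-disjoint $\sigma(v)$--$\sigma(c(H))$ paths of smaller total length; the symmetric difference can break the $\sigma(v)$--$\sigma(c(H))$ connectivity on some $P_j$. The pigeonhole claim---that more than $2k+1$ far-tail candidates $x$ guarantee a collision-free shortcut---is also unsupported: all those shortcuts must pass through one of the $k$ edges at $\sigma(c(H))$, and nothing in your argument stops them all from colliding there. The paper avoids this entirely: it never takes an arbitrary short path to the center, but only reroutes $Q_i$ along a $\sigma(e)$-path that is already edge-disjoint from the whole family $Q$, so no exchange is needed. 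The structural lemma that makes this possible is that at most $k$ of the $\sigma(e)$ can share an edge with $Q$ (each such $\sigma(e)$ is forced to contribute one of the $k$ edges of $Q$ at $\sigma(c(H))$), hence the set $W$ of their non-center vertices has size at most $2k$; combining this with the observation that every edge of $Q_i$ is incident with a ray or an $M(\sigma)$-vertex confines the tail of $Q_i$ beyond its first two steps to $W$, yielding $4k+2$ directly. To repair your argument you would need to replace the free shortcut-and-exchange step with this restricted rerouting, at which point you would essentially be reproducing the paper's proof.
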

\begin{proof}
By \igood{3}, there exist $k$ pairwise edge-disjoint paths $Q_1$, \ldots, $Q_k$ between $\sigma(v)$ and $\sigma(c(H))$; let $Q=Q_1\cup\ldots\cup Q_k$
and let $S$ be the set of edges $e\in E(H)$ such $\sigma(e)\subseteq Q_i$ for some $i\in \{1,\ldots, k\}$.
Let us choose these paths so that $|E(Q)|$ is as small as possible, and subject to that $|S|$ is as large as possible.
Clearly, $Q$ contains exactly $k$ edges incident with $\sigma(c(H))$.

Let $e$ be an edge of $H$ such that $\sigma(e)$ shares at least one edge $f$ with $Q$, say $f\in E(Q_1)$.
Suppose that $f$ is not incident with $\sigma(c(H))$, and let $f'$ be the other edge of $\sigma(e)$.
If $f'$ were not in $E(Q)$, then we could change $Q_1$ to use $f'$ to enter $\sigma(c(H))$,
thus either decreasing $|E(Q)|$, or adding $e$ to $S$, contrary to the choice of the paths $Q_1$, \ldots, $Q_k$.
We conclude that if $\sigma(e)$ contains an edge of $Q$, then $Q\cap \sigma(e)$ contains an edge incident with $\sigma(c(H))$.
Consequently, there are at most $k$ such edges $e\in E(H)$.  Let $$W=\bigcup_{e\in E(H), E(\sigma(e))\cap E(Q)\neq\emptyset} V(\sigma(e))\setminus\{\sigma(c(H))\}.$$
Since $\sigma(e)$ has length at most two for each $e\in E(H)$, we have $|W|\le 2k$.

Consider the path $Q_i$ for some $i\in \{1,\ldots, k\}$, and let $v_0v_1\ldots v_{\ell}$ be its vertices in order, where
$v_0=\sigma(c(H))$ and $v_{\ell}=v$.  Let $Z$ be the set of vertices of $Q_i$ belonging to $\sigma(R(H))\cup M(\sigma)$.
Suppose that there exists a vertex $z\in Z\setminus W$.  Note that there exists an edge $e\in E(H)$ such that $z$ belongs to $\sigma(e)$,
and by the definition of $W$, we have $E(\sigma(e))\cap E(Q)=\emptyset$.  Therefore, we can change the path $Q_i$ to follow
$\sigma(e)$ from $z$ to $\sigma(c(H))$.  Since we have chosen the paths with $|E(Q)|$ as small as possible, we conclude that the distance from
$z$ to $\sigma(c(H))$ in $Q_i$ is at most two.  Consequently, $(Z\setminus W)\cap V(Q_i)\subseteq \{v_1,v_2\}$.
Since $(G,H,\sigma)$ is peeled and satisfies \icanonical{4}, every edge of $Q_i$ is incident with a vertex of $Z$, and thus
each edge of $Q_i-\{v_0,v_1,v_2\}$ is incident with a vertex of $Z\cap W$.  Since $|W|\le 2k$ and $v_{\ell}=v\in Z$, this implies that
$Q_i$ has length at most $4k+2$.
\end{proof}

\begin{proof}[Proof of Theorem~\ref{thm-main}.]
Let $N=(2k+1)^{8k+4}$ and $d=d(k)n=k^2(k+1)Nn$.  By Lemma~\ref{grand}, there exists a peeled reduction
$(G', H', \sigma')$ of $(G,H,\sigma)$, where $(H',\sigma')$ is a $k$-system of magnitude $d'=\frac{d}{k(k+1)}$
in $G'$ and no vertex of $G'$ other than $\sigma'(c(H'))$ has degree greater than $2k+1$.
Consequently, for each $v\in \sigma'(R(H'))$, there exist at most $N$ vertices at distance at most $8k+4$
from $v$ in $G'-\sigma'(c(H'))$.  Since $|R(H')|\ge d'/k\ge Nn$, we can greedily choose a set $U\subset \sigma'(R(H'))$
of size $n$ such the distance in $G'-\sigma'(c(H'))$ between any two vertices of $U$ is at least $8k+5$.
By Lemma~\ref{lemma-bnddeg}, we can for each $u\in U$ find a set $S_u$ of $k$ pairwise edge-disjoint paths in $G'$ joining $u$ with
$\sigma'(c(H'))$, each of length at most $4k+2$.
By the choice of $U$, the paths $\bigcup_{u\in U} S_u$ are pairwise edge-disjoint and intersect only in their endvertices.
This set of paths corresponds to a strong immersion of $S_{n,k}$ in $G'$ respecting $(H', \sigma')$.
We conclude that $G$ contains a strong immersion of $S_{n,k}$ respecting $(H,\sigma)$.
\end{proof}

\bibliographystyle{siam}
\bibliography{star}

\end{document}